\def\thesection{\arabic{section}}
\def\theequation{\thesection.\arabic{equation}}
\newcommand{\fa} {\forall}
\newcommand{\ds} {\displaystyle}
\newcommand{\e}{\epsilon}
\newcommand{\al} {\alpha}
\newcommand{\ba} {\beta}
\newcommand{\de} {\delta}
\newcommand{\ga} {\gamma}
\newcommand{\Ga} {\Gamma}
\newcommand{\Om} {\Omega}
\newcommand{\ra} {\rightarrow}
\newcommand{\De} {\Delta}
\newcommand{\la} {\lambda}
\newcommand{\noi} {\noindent}
\newcommand{\mb} {\mathbb}
\newcommand{\mc} {\mathcal}
\newcommand{\lra} {\longrightarrow}
\newcommand{\ld} {\langle}
\newcommand{\rd} {\rangle}
\def\theequation{\@arabic{\c@section}.\@arabic{\c@equation}}
\def\QED{\hfill {$\square$}\goodbreak \medskip}
\newtheorem{Theorem}{Theorem}[section]
\newtheorem{Lemma}[Theorem]{Lemma}
\newtheorem{Proposition}[Theorem]{Proposition}
\newtheorem{Corollary}[Theorem]{Corollary}
\newtheorem{Remark}[Theorem]{Remark}
\newtheorem{Definition}[Theorem]{Definition}
\begin{document}
{\vspace{0.01in}

\title
{On The Fu\v{c}ik Spectrum Of Non-Local Elliptic Operators}

\author{
{\bf  Sarika Goyal\footnote{email: sarika1.iitd@gmail.com}} and {\bf  K. Sreenadh\footnote{e-mail: sreenadh@gmail.com}}\\
{\small Department of Mathematics}, \\{\small Indian Institute of Technology Delhi}\\
{\small Hauz Khaz}, {\small New Delhi-16, India}\\
 }

\date{}

\maketitle

\begin{abstract}

In this article, we study the Fu\v{c}ik spectrum of fractional
Laplace operator which is defined as the set of all $(\al,\ba)\in
\mb
 R^2$ such that
 \begin{equation*}
 \quad \left.
\begin{array}{lr}
 \quad (-\De)^s u = \al u^{+} - \ba u^{-} \; \text{in}\;
\Om \\
 \quad \quad \quad \quad u = 0 \; \mbox{in}\; \mb R^n \setminus\Om.\\
\end{array}
\quad \right\}
\end{equation*}
has a non-trivial solution $u$, where $\Om$ is a bounded domain in
$\mb R^n$ with Lipschitz boundary, $n>2s$, $s\in(0,1)$. The
existence of a first nontrivial curve $\mc C$ of this spectrum, some
properties of this curve $\mc C$, e.g. Lipschitz continuous,
strictly decreasing and asymptotic behavior are studied in this
article. A variational characterization of second eigenvalue of the
fractional eigenvalue problem is also obtained. At the end, we study
a nonresonance problem with respect to Fu\v{c}ik spectrum.

\medskip

\noi \textbf{Key words:} Non-local operator, fractional Laplacian,
Fu\v{c}ik spectrum, Nonresonance.

\medskip

\noi \textit{2010 Mathematics Subject Classification:} 35R11, 35R09,
35A15.

\end{abstract}

\bigskip
\vfill\eject

\section{Introduction}
The Fu\v{c}ik spectrum of fractional Laplace operator is defined as
the set of all $(\al,\ba)\in \mb
 R^2$ such that
$$
\left\{
\begin{array}{lr}
\quad (-\De)^{s} u = \al u^{+} - \ba u^{-}\;\text{in}\; \Om\\
\quad\quad u = 0 \;\text{on}\; \mb R^n \setminus \Om.
\end{array}
\right.
$$
has a non-trivial solution $u$, where $s\in (0,1)$ and $(-\De)^{s}$
be fractional Laplacian operator defined as
\begin{equation*}
(-\De)^{s} u(x)= -\frac{1}{2} \int_{\mb
R^n}\frac{u(x+y)+u(x-y)-2u(x)}{|y|^{n+2s}} dy \;\text{for all} \;
x\in \mb R^n.
\end{equation*}

\noi In general, we study the Fu\v{c}ik spectrum of an equation
driven by the non-local operator $\mc L_{K}$ which is defined as
\[\mc L_K u(x)= \frac12 \int_{\mb R^n}(u(x + y) + u(x- y) -2u(x))K(y)
dy\;\;\text{for\; all}\;\; x\in \mb R^n,\]
where $K :\mb R^n\setminus\{0\}\ra(0,\infty)$ satisfies the following:\\
$(i)\; mK \in L^1(\mb R^n),\;\text{where}\; m(x) = \min\{|x|^2,
1\}$,\\
$(ii)$ There exist $\la>0$ and $s\in(0,1)$ such that $K(x)\geq \la
|x|^{-(n+2s)},$\\
$(iii)\; K(x) = K(-x)$ for any $ x\in \mb R^n\setminus\{0\}$.

\noi In case $K(x) = |x|^{-(n+2s)}$, $\mc L_K$ is the fractional
Laplace operator $-(-\De)^s$.\\

\noi The Fu\v{c}ik spectrum of the non-local operator $\mc L_{K}$ is
defined as the set $\sum_{K}$ of $(\al,\ba)\in \mb
 R^2$ such that
\begin{equation}\label{eq01}
 \quad \left.
\begin{array}{lr}
 \quad -\mc L_{K}u = \al u^{+} - \ba u^{-} \; \text{in}\;
\Om \\
 \quad \quad \quad \quad u = 0 \; \mbox{in}\; \mb R^n \setminus\Om.\\
\end{array}
\quad \right\}
\end{equation}

\noi \eqref{eq01} has a nontrivial solution $u$. Here $u^{\pm} =
\max(\pm u, 0)$ and $\Om\subset \mb R^n$ is a bounded
 domain with Lipshitz boundary. For $\al=\ba$, Fu\v{c}ik spectrum of \eqref{eq01} becomes the usual spectrum of
\begin{equation}\label{eq02}
 \quad \left.
\begin{array}{lr}
 \quad -\mc L_{K}u = \la u \; \text{in}\;
\Om \\
 \quad \quad u = 0 \; \mbox{in}\; \mb R^n \setminus\Om.\\
\end{array}
\quad \right\}
\end{equation}
 Let $0<\la_1<\la_2\leq...\leq\la_k\leq...$ denote the distinct eigenvalues of
  \eqref{eq02}. It is proved in \cite{var} that the first eigenvalue $\la_1$ of \eqref{eq02} is simple,
isolated and can be characterized as follows
\[\la_1 = \inf_{u\in X_{0}}\left\{\int_{Q}(u(x)-u(y))^2 K(x-y)dxdy : \int_{\Om} u^2=1\right\}.\]
The author also proved that the eigenfunction corresponding to
$\la_1$ are of constant sign. We observe that $\sum_{K}$ clearly
contains $(\la_k,\la_k)$ for each $k\in \mb N$ and two lines
$\la_1\times\mb R$ and $\mb R\times\la_1$. $\sum_{K}$ is symmetric
with respect to diagonal. In this paper we will prove that the two
lines $\mb R\times \la_1$ and $\la_1\times \mb R$ are isolated in
$\sum_{K}$ and give a variational characterization of second
eigenvalue $\la_2$ of $-\mc L_K$.

\noi When $s=1$, the fractional Laplacian operator become the usual
Laplace operator. Fu\v{c}ik spectrum is introduced by Fu\v{c}ik in
1976. The negative Laplacian in one dimension with periodic boundary
condition is studied in \cite{fu}. Also study of Fu\v{c}ik spectrum
in case of Laplacian, p-Laplacian equation with Dirichlet, Neumann
and robin boundary condition has been studied by many authors \cite{
al, ar, cfg, cg, fg, ro, ros, ap, pe, se}. A nonresonance problem
with respect to Fu\v{c}ik spectrum is also discussed in many papers
\cite{cfg, pr, mp}. To best of our knowledge, no work has been done
to find the Fu\v{c}ik spectrum for non-local operator. Recently a
lot of attention is given to the study of fractional and non-local
operator of elliptic type due to concrete real world applications in
finance, thin obstacle problem, optimization, quasi-geostrophic flow
etc \cite{mp, ls, weak, low}. Here we use the similar approach to
find Fu\v{c}ik spectrum that is used in \cite{cfg}.

\noi In \cite{mp}, Servadei and Valdinoci discussed Dirichlet
boundary value problem in case of fractional Laplacian using the
Variational techniques. We also used the similar variational
technique to find $\sum_{K}$. Due to non-localness of the fractional
Laplacian, the space $(X_0,\|.\|_{X_0})$ is introduced by Servadei.
We introduce this space as follows:
\[X= \left\{u|\;u:\mb R^n \ra\mb R \;\text{is measurable},
u|_{\Om} \in L^2(\Om)\;
 and\;  \left(u(x)- u(y)\right)\sqrt{K(x-y)}\in
L^2(Q)\right\},\]

\noi where $Q=\mb R^{2n}\setminus(\mc C\Om\times \mc C\Om)$ and
 $\mc C\Om := \mb R^n\setminus\Om$. The space X is endowed with the norm defined as
\begin{align*}\label{eq44}
 \|u\|_X = \|u\|_{L^2(\Om)} +\left( \int_{Q}|u(x)-u(y)|^{2}K(x-y)dx
dy\right)^{\frac12}.
\end{align*}
 Then we define
 \[ X_0 = \{u\in X : u = 0 \;\text{a.e. in}\; \mb R^n\setminus \Om\}\]
with the norm
\[\|u\|_{X _0}=\left( \int_{Q}|u(x)-u(y)|^{2}K(x-y)dx
dy\right)^{\frac12}\] is an Hilbert space. Note that the norm
$\|.\|_{X_0}$ involves the interaction between $\Om$ and $\mb
R^n\setminus\Om$.
\begin{Remark}
 $(i)$ $C_{c}^{2}(\Om)\subseteq X_{0}$, $X\subseteq H^s(\Om)$ and
$X_0\subseteq H^s(\mb R^n)$, where $H^s(\Om)$ denotes the usual
fractional Sobolev space endowed with the norm
\[\|u\|_{H^{s}(\Om)}=\|u\|_{L^2}+ \left(\int_{\Om\times\Om} \frac{(u(x)-u(y))^{2}}{|x-y|^{n+2s}}dxdy \right)^{\frac 12}.\]

\noi $(ii)$ The embedding $X_{0}\hookrightarrow L^{2^*}(\mb
R^n)=L^{2^*}(\Om)$ is continuous, where $2^*=\frac{2n}{n-2s}$. To
see the detailed of these embeddings, one can refer \cite{hi, mp}.
\end{Remark}
\begin{Definition}\label{def1}
 A function $u \in X_0$ is a weak solution of \eqref{eq01}, if for every $v\in X_0$, $u$
satisfies
\begin{align*}
\int_{Q}(u(x)-u(y))(v(x)-v(y)) K(x-y)dxdy = \al \int_{\Om} u^{+} v
dx - \ba \int_{\Om}u^{-} v dx,
\end{align*}
\end{Definition}
\noi Weak solutions of \eqref{eq01} are exactly the critical points
of the functional $J : X_0 \ra \mb R$ defined as
\[  J (u) = \frac12
\int_{Q}(u(x)-u(y))^{2} K(x-y) dx dy - \frac{\al}{2} \int_{\Om}
(u^{+})^2dx - \frac{\ba}{2} \int_{\Om} (u^{-})^2dx.\] $J$ is
Fr$\acute{e}$chet differentiable in $ X_0$ and
\[\langle J^\prime (u),\phi\rangle=\int_{Q}(u(x)-u(y))(\phi(x)-\phi(y)) K(x-y) dx dy - \al \int_{\Om}
u^{+} \phi dx - \ba \int_{\Om} u^{-} \phi dx.\]

\noi The paper is organized as follows: In section 2 we construct a
first nontrivial curve in $\sum_{K}$, described as $(p+c(p),c(p))$.
In section 3 we prove that the lines $\la_1\times \mb R$ and $\mb
R\times\la_1$ are isolated in $\sum_{K}$, the curve that we obtained
in section 2 is the first nontrivial curve and give the variational
characterization of second eigenvalue of $-\mc L_K$. In section 4 we
prove some properties of the first curve. A non-resonance problem
with respect to Fu\v{c}ik spectrum is also studied in section 5.

\noi We shall throughout use the function space $X_0$ with the norm
$\|.\|_{X_0}$ and we use the standard $L^{p}(\Om)$ space whose norms
are denoted by $\|u\|_{L^p}$. Also $\phi_1$ is the eigenfunction of
$-\mc L_{K}$ corresponding to $\la_1$.

\section{Fu\v{c}ik Spectrum $\sum_{K}$ for $-\mc L_K$}
In this section we study the existence of first nontrivial curve in
the Fu\v{c}ik spectrum $\sum_{K}$ of $-\mc L_K$. We find that the
points in $\sum_{K}$ are associated with the critical value of some
restricted functional.

\noi For  this we fix $p\in \mb R$ and for $p\geq 0$, consider the
functional $J_{p}: X_0\ra \mb R $ by
\begin{align}
J_{p}(u)= \int_{Q}(u(x)-u(y))^2 K(x-y)dxdy - p\int_{\Om} (u^{+})^2
dx.
\end{align}
Then $J_{p}\in C^{1}(X_0,\mb R)$ and for any $\phi\in X_0$
\[\langle J_{p}^{\prime}(u),\phi \rangle = 2 \int_{Q}
(u(x)-u(y))(\phi(x)-\phi(y))K(x-y)dx dy - 2p \int_{\Om}
u^{+}(x)\phi(x)dx.\]

\noi Also $\tilde{J_{p}}:= J_{p}|_{\mc P}$ is $C^1(X_0,\mb R)$,
where $\mc P$ is defined as \[\mc P=\left\{u\in X_0 : I(u):=
\int_{\Om} u^2 dx =1\right\}.\]

\noi We first note that $u\in\mc P$ is a critical point of
$\tilde{J_{p}}$ if and only if there exists $t\in \mb R$ such that
\begin{align}\label{eq2}
\int_{Q}(u(x)-u(y))(v(x)-v(y)) K(x-y)dxdy - p\int_{\Om} u^{+} v dx =
t\int_{\Om}u v dx,
\end{align}
for all $v\in X_0$. Hence $u\in \mc P$ is a nontrivial weak solution
of the problem
\begin{align*}
-\mc L_{K}u &= (p+t) u^{+} - t u^{-} \; \text{in}\;
\Om ;\\
 u &= 0 \; \mbox{on}\; \mb R^n \setminus\Om,
 \end{align*}
which exactly means $(p+t,t)\in \sum_{K}$. Putting $v=u$ in
\eqref{eq2}, we get $t= \tilde{J_{p}}(u)$. Thus we have the
following result, which describe the relationship between the
critical points of $\tilde{J_{p}}$ and the spectrum $\sum_{K}$.
\begin{Lemma}
For $p\geq 0$, $(p+t,t)\in \mb R^2$ belongs to the spectrum
$\sum_{K}$ if and only if there exists a critical point $u\in \mc P$
of $\tilde{J_{p}}$ such that $t= \tilde{J_{p}}(u)$, a critical
value.
\end{Lemma}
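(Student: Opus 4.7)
The statement is a standard Lagrange multiplier characterization, so the plan is to realize critical points of the constrained functional $\tilde{J_p}$ as weak solutions of \eqref{eq01} with the specific eigenvalue pair $(p+t,t)$, and to identify the Lagrange multiplier $t$ with the critical value $\tilde{J_p}(u)$.

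First I would record that $\mathcal{P}=I^{-1}(1)$ is a $C^1$ codimension-one submanifold of $X_0$, since $I(u)=\int_\Om u^2\,dx$ is $C^1$ and its derivative $\langle I'(u),v\rangle = 2\int_\Om uv\,dx$ is nonzero whenever $u\in\mathcal{P}$ (testing $v=u$ gives $2$). Hence $\tilde{J_p}=J_p|_{\mathcal{P}}$ is $C^1$ on $\mathcal{P}$ and the classical Lagrange multiplier rule applies: $u\in\mathcal{P}$ is a critical point of $\tilde{J_p}$ if and only if there exists $t\in\mathbb{R}$ with $J_p'(u)=2t\,I'(u)$ in $X_0^*$, which unwinds to equation \eqref{eq2} for every $v\in X_0$ (the factor $2$ from both $J_p'$ and $I'$ cancels cleanly).

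For the $(\Leftarrow)$ direction, I would start from a critical point $u\in\mathcal{P}$ of $\tilde{J_p}$, extract the multiplier $t$, and rewrite the resulting identity. Since $pu^+ v + tu\,v = pu^+v + t(u^+-u^-)v = (p+t)u^+v - tu^-v$ pointwise, equation \eqref{eq2} becomes exactly the weak formulation in Definition \ref{def1} with $\alpha=p+t$ and $\beta=t$. Because $u\in\mathcal{P}$ is nontrivial (indeed $\|u\|_{L^2}=1$), this shows $(p+t,t)\in\sum_K$. Testing \eqref{eq2} with $v=u$ and using $\int_\Om u^2=1$ yields
\begin{equation*}
t=\int_Q(u(x)-u(y))^2K(x-y)\,dxdy - p\int_\Om (u^+)^2\,dx = \tilde{J_p}(u),
\end{equation*}
identifying the multiplier with the critical value.

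For the $(\Rightarrow)$ direction, I would start from $(p+t,t)\in\sum_K$ with a nontrivial weak solution $w$, normalize $u:=w/\|w\|_{L^2}\in\mathcal{P}$, and check that $u$ still solves the weak equation with the same pair $(p+t,t)$ by linearity of both sides in $u$. Rearranging that weak equation by moving the $-tu$ term to the right gives precisely \eqref{eq2}, hence $u$ is a critical point of $\tilde{J_p}$ with multiplier $t$; the same test $v=u$ then gives $t=\tilde{J_p}(u)$.

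The argument is essentially bookkeeping, so there is no serious obstacle. The only point requiring a moment of care is the identity $pu^+v + tuv = (p+t)u^+v - tu^-v$ that bridges the Lagrange-multiplier form \eqref{eq2} with the Fu\v{c}ik weak formulation, together with the observation that $J_p'$ is twice the bilinear form appearing in Definition \ref{def1} so that the factors of $2$ on both sides of $J_p'(u)=2t\,I'(u)$ cancel without disturbing the identification $(\alpha,\beta)=(p+t,t)$.
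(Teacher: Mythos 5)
Your proposal is correct and follows essentially the same route as the paper: the Lagrange multiplier identity \eqref{eq2}, the pointwise rewriting $p u^{+}v + t u v = (p+t)u^{+}v - t u^{-}v$, and the test $v=u$ to identify $t=\tilde{J_{p}}(u)$. One tiny wording quibble: in the forward direction the normalization $u=w/\|w\|_{L^2}$ works because the right-hand side is positively homogeneous (since $(cw)^{\pm}=cw^{\pm}$ for $c>0$), not because it is linear, but the step itself is valid.
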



\noi Now we look for the minimizers of $\tilde{J_{p}}$.

\begin{Proposition}
The first eigenfunction $\phi_1$ is a global minimum for
$\tilde{J_{p}}$ with $\tilde{J_{p}}(\phi_1)=\la_1-p$. The
corresponding point in $\sum_{K}$ is $(\la_1,\la_1-p)$ which lies on
the vertical line through $(\la_1,\la_1)$.
\end{Proposition}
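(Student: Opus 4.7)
The plan is to check directly that $\phi_1$ attains the value $\la_1-p$ and then show that $\tilde{J_p}(u)\ge \la_1-p$ for every $u\in\mathcal P$, after which the preceding lemma converts the minimum into the claimed point of $\sum_K$.

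First I would normalize $\phi_1$ so that $\int_\Omega \phi_1^2\,dx=1$ and, using that the first eigenfunction has constant sign, assume $\phi_1\ge 0$ so that $\phi_1^+=\phi_1$. Then the variational characterization of $\la_1$ from the Introduction gives
\[
\int_Q (\phi_1(x)-\phi_1(y))^2 K(x-y)\,dx\,dy \;=\; \la_1 \int_\Omega \phi_1^2\,dx \;=\; \la_1,
\]
so that $\tilde{J_p}(\phi_1)=\la_1-p\int_\Omega \phi_1^2\,dx=\la_1-p$.

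Next, for an arbitrary $u\in\mathcal P$, I would use two elementary bounds. From the variational characterization of $\la_1$,
\[
\int_Q (u(x)-u(y))^2 K(x-y)\,dx\,dy \;\ge\; \la_1 \int_\Omega u^2\,dx \;=\; \la_1.
\]
Since $p\ge 0$ and pointwise $(u^+)^2\le u^2$, we also have $-p\int_\Omega (u^+)^2\,dx \ge -p\int_\Omega u^2\,dx = -p$. Adding these two inequalities gives $\tilde{J_p}(u)\ge \la_1-p=\tilde{J_p}(\phi_1)$, which shows $\phi_1$ is a global minimizer. In particular it is a critical point of $\tilde{J_p}$ on $\mathcal P$.

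Finally, applying the previous lemma with the critical value $t=\tilde{J_p}(\phi_1)=\la_1-p$ yields that $(p+t,t)=(\la_1,\la_1-p)$ belongs to $\sum_K$, i.e., this point lies on the vertical line $\{\la_1\}\times\mathbb R$ and passes through $(\la_1,\la_1)$ when $p=0$. There is no real obstacle here; the only detail to be careful about is the sign assumption on $\phi_1$ so that $\phi_1^+=\phi_1$, and the $L^2$--normalization placing $\phi_1$ in $\mathcal P$, both of which are justified by the results quoted from \cite{var}.
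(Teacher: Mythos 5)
Your proof is correct and follows essentially the same route as the paper: compute $\tilde{J_p}(\phi_1)=\la_1-p$ using the sign and normalization of $\phi_1$, bound $\tilde{J_p}(u)\ge \la_1\int_\Om u^2\,dx - p\int_\Om (u^+)^2\,dx \ge \la_1-p$ on $\mc P$ via the variational characterization of $\la_1$ and $(u^+)^2\le u^2$ with $p\ge 0$, and then invoke the preceding lemma to place $(\la_1,\la_1-p)$ in $\sum_K$. No further comment is needed.
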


\begin{proof}
It is easy to see that $\tilde{J_{p}}(\phi_1) = \la_1 - p$ and
\begin{align*}
\tilde{J_{p}}(u)=& \int_{Q}(u(x)-u(y))^2 K(x-y)dxdy -
p\int_{\Om} (u^{+})^2 dx\\
 \geq &  \la_1 \int_{\Om} u^2 dx - p\int_{\Om} (u^{+})^2 dx
 \geq \la_1 -p.
\end{align*}
Thus $\phi_1$ is a global minimum of $\tilde{J_{p}}$ with
$\tilde{J_{p}}(\phi_1)=\la_1-p$.\QED
\end{proof}
\noi Now we have a second critical point of $\tilde{J_{p}}$ at
$-\phi_1$ corresponding to strict local minimum.
\begin{Proposition}
The negative eigenfunction $-\phi_1$ is a strict local minimum for
$\tilde{J_{p}}$ with $\tilde{J_{p}}(-\phi_1)=\la_1$. The
corresponding point in $\sum_{K}$ is $(\la_1+p,\la_1)$, which lies
on the horizontal line through $(\la_1,\la_1)$.
\end{Proposition}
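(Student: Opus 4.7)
My plan splits in two parts: first verify the value $\tilde{J_p}(-\phi_1)=\la_1$ and identify the associated Fu\v{c}ik point, then prove the strict local minimum property by contradiction.

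For the value, since the first eigenfunction $\phi_1$ has constant sign (say $\phi_1>0$ a.e.\ in $\Om$, as recalled from \cite{var}), we have $(-\phi_1)^+\equiv 0$ and hence $\tilde{J_p}(-\phi_1)=\|\phi_1\|_{X_0}^2=\la_1$ (using $\|\phi_1\|_{L^2}=1$ since $-\phi_1\in\mc P$). Applying the preceding Lemma with $t=\la_1$, the associated point in $\sum_K$ is $(p+\la_1,\la_1)$, which lies on the horizontal line through $(\la_1,\la_1)$.

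For the strict local minimum, the plan is to argue by contradiction. Suppose there exist $u_n\in\mc P$ with $u_n\neq-\phi_1$, $u_n\to-\phi_1$ in $X_0$, and $\tilde{J_p}(u_n)\leq \la_1$. Setting $v_n:=u_n+\phi_1$, I would decompose $L^2$-orthogonally as $v_n=c_n\phi_1+z_n$ with $\int_\Om z_n\phi_1\,dx=0$; then $v_n\to 0$ in $X_0$ forces $c_n\to 0$ and $z_n\to 0$ in $X_0$. The normalization $\|u_n\|_{L^2}=1$ gives $2\int_\Om \phi_1 v_n\,dx=\|v_n\|_{L^2}^2$, and expanding the $X_0$-norm while using $\langle\phi_1,z_n\rangle_{X_0}=\la_1\int\phi_1 z_n\,dx=0$ produces the identity
\begin{align*}
\tilde{J_p}(u_n)-\la_1 = \|z_n\|_{X_0}^2 - \la_1\|z_n\|_{L^2}^2 - p\int_\Om (u_n^+)^2\,dx.
\end{align*}

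The crucial estimate is to bound $\int_\Om(u_n^+)^2$. On $A_n:=\{u_n>0\}=\{v_n>\phi_1\}$, the decomposition $u_n=(c_n-1)\phi_1+z_n\leq z_n$ holds for large $n$ (when $c_n\leq 1$ and since $\phi_1\geq 0$), so by H\"older with exponent $2^*/2$ and the embedding $X_0\hookrightarrow L^{2^*}(\Om)$ from Remark~(ii),
\begin{align*}
\int_\Om (u_n^+)^2\,dx \leq \int_{A_n} z_n^2\,dx \leq |A_n|^{2s/n}\|z_n\|_{L^{2^*}}^2 \leq C\,|A_n|^{2s/n}\|z_n\|_{X_0}^2.
\end{align*}
Combined with the standard spectral inequality $\|z_n\|_{X_0}^2\geq\la_2\|z_n\|_{L^2}^2$ (valid since $z_n\perp\phi_1$ in $L^2$ and $-\mc L_K$ has compact resolvent with discrete spectrum $\la_1<\la_2\leq\cdots$), this yields
\begin{align*}
\tilde{J_p}(u_n)-\la_1 \geq \bigl[(1-pC|A_n|^{2s/n})\la_2 - \la_1\bigr]\|z_n\|_{L^2}^2.
\end{align*}

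The main obstacle is verifying $|A_n|\to 0$. The plan is to split: for any $\eta>0$, $A_n\cap\{\phi_1>\eta\}\subseteq\{|v_n|>\eta\}$ has measure at most $\|v_n\|_{L^2}^2/\eta^2\to 0$ by Chebyshev, while $|\{\phi_1\leq\eta\}\cap\Om|\to 0$ as $\eta\to 0$ since $\phi_1>0$ a.e.\ in $\Om$; sending $n\to\infty$ and then $\eta\to 0$ gives $|A_n|\to 0$. Consequently, for large $n$ the bracketed coefficient exceeds a fixed positive constant (close to $\la_2-\la_1>0$), forcing $\tilde{J_p}(u_n)>\la_1$ unless $z_n\equiv 0$. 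In that remaining case $u_n=(c_n-1)\phi_1$ with $c_n\to 0$ and $\|u_n\|_{L^2}=1$, which forces $c_n=0$ and thus $u_n=-\phi_1$, contradicting $u_n\neq-\phi_1$. This completes the contradiction and establishes the strict local minimum property.
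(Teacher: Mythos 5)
Your argument is correct, but it follows a genuinely different route from the paper. The paper's proof shows that a contradicting sequence $u_k\to-\phi_1$ must change sign, normalizes $w_k=u_k^+/\|u_k^+\|_{L^2}$, proves its Rayleigh quotient $r_k\to\infty$ (via weak compactness and the shrinking of $\{u_k\geq\e\}$), and then derives $(r_k-p-\la_1)\int_\Om(u_k^+)^2\,dx\leq 0$ from the decomposition of $(u_k(x)-u_k(y))^2$ into positive/negative parts plus nonnegative cross terms, contradicting $r_k\to\infty$; it needs only $\phi_1\geq 0$, simplicity of $\la_1$, and the symmetry of $K$. You instead expand around $-\phi_1$ via the $L^2$-orthogonal splitting $u_n=(c_n-1)\phi_1+z_n$, obtain the exact identity $\tilde{J_p}(u_n)-\la_1=\|z_n\|_{X_0}^2-\la_1\|z_n\|_{L^2}^2-p\int_\Om(u_n^+)^2$, and control the perturbation term by H\"older, the embedding $X_0\hookrightarrow L^{2^*}$, and the smallness of $|\{u_n>0\}|$, closing with the spectral gap $\|z\|_{X_0}^2\geq\la_2\|z\|_{L^2}^2$ for $z\perp\phi_1$. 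This buys a quantitative local lower bound of order $(\la_2-\la_1)\|z_n\|_{L^2}^2$, at the price of two extra inputs: (i) the Courant--Fischer characterization of $\la_2$ (available from \cite{var}, and independent of the paper's later mountain-pass characterization, so no circularity); and (ii) strict positivity $\phi_1>0$ a.e.\ in $\Om$, which you need for $|\{u_n>0\}|\to 0$ but which does not follow from the ``constant sign'' statement of \cite{var} alone --- in this paper it is only justified later (Lemma~\ref{111}) through continuity and viscosity-solution results \cite{low,weak,peter}. With that citation repaired, your proof stands; note also that the paper's argument avoids $\phi_1>0$ entirely, which is why it gets by with weaker information about the first eigenfunction.
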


\begin{proof}
Let us suppose by contradiction that there exists a sequence $u_k\in
\mc P$, $u_k\ne -\phi_1$ with $\tilde{J_{p}}(u_k) \leq \la_1$,
$u_k\ra -\phi_1$ in $X_0$. Firstly, we show that $u_k$ changes sign
for sufficiently large $k$. Since $u_k\ne -\phi_1$, it must be $\leq
0$ for some $x\in X_0$. If $u_k \leq 0$ for a.e $x\in \Om$, then
\begin{align}
\tilde{J_{p}}(u_k)= \int_{Q}(u_k(x)-u_k(y))^2 K(x-y) dxdy
>\la_1,
\end{align}
since $u_k\ne \pm\phi_1$ and we get contradiction as
$\tilde{J_{p}}(u_k) \leq \la_1$. So $u_k$ changes sign for
sufficiently large $k$. Define $w_k :=
\frac{u_{k}^{+}}{\|u_{k}^{+}\|_{L^{2}}}$ and
\[r_k := \int_{Q}(w_{k}(x)-w_{k}(y))^2 K(x-y)dxdy. \]

\noi Now we claim that $r_k \ra \infty$. Let us suppose by
contradiction that $r_k$ is bounded. Then there exists a subsequence
of $w_k$ still denoted by $w_k$ and $w\in X_0$ such that $w_k
\rightharpoonup w$ weakly in $X_0$ and $w_k\ra w$ strongly in
$L^{2}(\mb R^n)$. Therefore $\int_{\Om} w^2 dx =1$, $w\geq 0$ a.e.
and so for some $\e>0$, $\de=|\{x\in X_0: w(x)\geq \e\}|>0$. As
$u_k\ra -\phi_1$ in $X_0$ and hence in $L^{2}(\Om)$. Therefore
$|\{x\in \Om : u_k(x)\geq \e\}|\ra 0$ as $k\ra \infty$ and so
$|\{x\in \Om : w_k(x)\geq \e\}|\ra 0$ as $k\ra\infty$ which is a
contradiction as $\de>0$ . Hence the claim. Next,
\begin{align*}
(u_k(x)-&u_k(y))^2=
((u_{k}^{+}(x)-u_{k}^{+}(y))-(u_{k}^{-}(x)-u_{k}^{-}(y)))^2\notag\\
=&(u_{k}^{+}(x)-u_{k}^{+}(y))^2+ (u_{k}^{-}(x)-u_{k}^{-}(y))^2 - 2
(u_{k}^{+}(x)-u_{k}^{+}(y))(u_{k}^{-}(x)-u_{k}^{-}(y))\notag\\
=& (u_{k}^{+}(x)-u_{k}^{+}(y))^2+ (u_{k}^{-}(x)-u_{k}^{-}(y))^2 +
 2u_{k}^{+}(x) u_{k}^{-}(y)+ 2 u_{k}^{-}(x) u_{k}^{+}(y),
 \end{align*}
 where we have used $u_{k}^{+}(x) u_{k}^{-}(x)=0$. Using $K(x)=K(-x)$ we have
 \begin{align}\label{e12}
\int_{Q}u_{k}^{+}(x) u_{k}^{-}(y) K(x-y)dxdy= \int_{Q} u_{k}^{+}(y)
u_{k}^{-}(x) K(x-y)dxdy.
\end{align}
 Then from above estimates, we get
\begin{align*}
 \tilde{J_{p}}(u_k)=& \int_{Q}(u_k(x)-u_k(y))^2 K(x-y)dxdy -
p\int_{\Om} (u^{+}_k)^2 dx\\
=& \int_{Q}(u_{k}^{+}(x)-u_{k}^{+}(y))^2 K(x-y)dxdy +
\int_{Q}(u_{k}^{-}(x)-u_{k}^{-}(y))^2 K(x-y)dxdy\\
& + 4\int_{Q}u_{k}^{+}(x) u_{k}^{-}(y) K(x-y)dxdy - p\int_{\Om}
(u^{+}_{k})^2 dx\\
\geq & (r_k-p) \int_{\Om} (u_{k}^{+})^2 dx + \la_1
\int_{\Om}(u_{k}^{-})^2 dx + 4\int_{Q}u_{k}^{+}(x)
u_{k}^{-}(y) K(x-y)dxdy \\
\geq & (r_k-p) \int_{\Om} (u_{k}^{+})^2 dx + \la_1
\int_{\Om}(u_{k}^{-})^2 dx.
\end{align*}
As $u_k\in \mc P$, we get
\[\tilde{J_{p}}(u_k)\leq \la_1 = \la_1 \int_{\Om} (u_{k}^{+})^2 dx+
\la_1 \int_{\Om} (u_{k}^{-})^2 dx.\] \noi Combining both the
inequalities we have,
\[(r_k -p) \int_{\Om} (u_{k}^{+})^2 dx + \la_1 \int_{\Om}
(u_{k}^{-})^2 dx \leq \la_1 \int_{\Om} (u_{k}^{+})^2 dx + \la_1
\int_{\Om} (u_{k}^{-})^2 dx.\] \noi This implies $(r_k -p-\la_1)
\int_{\Om} (u_{k}^{+})^2 dx\leq 0,$ and hence $r_k- p\leq \la_1$,
\noi which contradicts the fact that $r_k\ra +\infty$, as
required.\QED
\end{proof}

We will now find the third critical point based on mountain pass
Theorem. A norm of derivative of the restriction $\tilde{J_{p}}$ of
$J_{p}$ at $u\in \mc P$ is defined as
\[\|\tilde{J_{p}}(u)\|_{*}=\min\{\|\tilde{J}_{p}^{\prime}(u)- t I^{\prime}(u)\|_{X_0}:  t\in \mb R\}.\]
\begin{Definition}
We say that $J_{p}$ satisfies the Palais-Smale (in short, (P.S))
condition on $\mc P$ if for any sequence $u_k\in \mc P$ such that
$J_{p}(u_k)$ is bounded and $\|\tilde{J}^{\prime}_{p}(u_k)\|_{*} \ra
0$, then there exists a subsequence that converges strongly in
$X_0$.
\end{Definition}
Now we state here the version of mountain pass theorem, that will be
used later.
\begin{Proposition}\label{pr01}
Let $E$ be a Banach space, $g,f \in C^{1}(E,\mb R)$, $M=\{u\in E
\;|\; g(u)=1\}$ and $u_0$, $u_1\in M$. let $\e>0$ such that
$\|u_1-u_0\|>\e$ and \[\inf\{f(u): u\in M \;\mbox{and}\;
\|u-u_0\|_{E}=\e\}>\max\{f(u_0),f(u_1)\}.\]

\noi Assume that $f$ satisfies the $(P.S)$ condition on $M$ and that
\[\Gamma =\{\ga \in C([-1,1], M): \ga(-1)=u_0 \;\mbox{and}\; \ga(1)=u_1\}\]
is non empty. Then $c=\inf_{\ga \in \Gamma}\max_{u\in\ga[-1,1]}
f(u)$ is a critical value of $f|_M$.
\end{Proposition}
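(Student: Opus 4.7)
The plan is to follow the classical mountain pass scheme, but carried out on the $C^{1}$ submanifold $M\subset E$ rather than on the whole space. Two ingredients drive the argument: a suitable deformation lemma for $\tilde f:=f|_M$ on $M$, and the geometric fact that every admissible path must cross the separating sphere $S_{\e}:=\{u\in M:\|u-u_0\|_E=\e\}$.

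First I would establish the easy inequality $c>\max\{f(u_0),f(u_1)\}$. Given any $\ga\in\Ga$, since $\ga(-1)=u_0$, $\ga(1)=u_1$ and $\|u_1-u_0\|_E>\e$, continuity of $t\mapsto\|\ga(t)-u_0\|_E$ gives some $t_\ga\in(-1,1)$ with $\|\ga(t_\ga)-u_0\|_E=\e$, so $\ga(t_\ga)\in S_{\e}$ and $\max_{t}f(\ga(t))\ge\inf_{S_\e}f>\max\{f(u_0),f(u_1)\}$. Taking the infimum over $\Ga$ gives $c>\max\{f(u_0),f(u_1)\}$, and in particular $c$ is finite.

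Next, the core step is a deformation lemma on $M$. Assuming for contradiction that $c$ is not a critical value of $\tilde f$, I would build a locally Lipschitz pseudo-gradient tangent field $V:\{c-2\bar\e\le\tilde f\le c+2\bar\e\}\to TM$, $V(u)\in T_uM=\ker g'(u)$, satisfying $\|V(u)\|_E\le 2\|\tilde f'(u)\|_*$ and $\langle\tilde f'(u),V(u)\rangle\ge\|\tilde f'(u)\|_*^2$; this is the standard construction via partitions of unity on the $C^{1}$ manifold $M$, using that the tangent projection is continuous where $g'(u)\neq 0$. Combined with the $(P.S)$ hypothesis (which yields a uniform lower bound $\|\tilde f'\|_*\ge\de>0$ in a strip around level $c$), the flow of a cut-off version of $V$ produces a continuous map $\eta:[0,1]\times M\to M$ with $\eta(0,\cdot)=\mbox{id}$, $\eta(s,u)=u$ outside the strip, and $\eta(1,\{\tilde f\le c+\bar\e\})\subseteq\{\tilde f\le c-\bar\e\}$ for some small $\bar\e>0$, with $\bar\e<c-\max\{f(u_0),f(u_1)\}$.

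Finally I would pick $\ga\in\Ga$ with $\max f\circ\ga\le c+\bar\e$ and set $\tilde\ga(t):=\eta(1,\ga(t))$. Because $f(u_0),f(u_1)<c-\bar\e$ the endpoints are fixed, so $\tilde\ga\in\Ga$, yet $\max f\circ\tilde\ga\le c-\bar\e$, contradicting the definition of $c$. Hence $c$ must be a critical value of $\tilde f$. The main technical obstacle is the construction of the pseudo-gradient flow tangent to $M$: one must check that $V$ can be chosen locally Lipschitz and tangential, and that cut-off in $f$ preserves tangency so the flow genuinely maps $M$ to $M$. An alternative route, should the manifold setup become cumbersome, is to bypass the explicit deformation and apply Ekeland's variational principle to $\Phi(\ga):=\max_{t\in[-1,1]}f(\ga(t))$ on the complete metric space $(\Ga,d_\infty)$; the resulting almost-minimizing path produces a $(P.S)$ sequence for $\tilde f$ at level $c$, and the $(P.S)$ assumption delivers the critical point.
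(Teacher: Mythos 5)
The paper does not actually prove this proposition: it is quoted as a known version of the mountain pass theorem on the constraint manifold $\mc P$ (it is essentially Theorem 2.10 of \cite{cfg}), so your attempt has to stand on its own. The easy half of your argument is fine: since $\|u_1-u_0\|_E>\e$, every $\ga\in\Gamma$ meets the sphere $S_\e=\{u\in M:\|u-u_0\|_E=\e\}$ by continuity of $t\mapsto\|\ga(t)-u_0\|_E$, so $c\ge\inf_{S_\e}f>\max\{f(u_0),f(u_1)\}$ and $c$ is finite.

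The substantive part, however, has a real gap exactly where you place the weight. First, as stated $M$ need not be a manifold at all: one needs $1$ to be a regular value of $g$, i.e. $g'(u)\ne 0$ for all $u\in M$ (true in the paper's application, where $g(u)=\int_\Om u^2$ gives $\langle g'(u),u\rangle=2$ on $\mc P$), and both your tangent spaces $T_uM=\ker g'(u)$ and the norm $\|\cdot\|_*$ in the $(P.S)$ condition presuppose this; the hypothesis must be added. Second, and more seriously, the construction of a \emph{locally Lipschitz} pseudo-gradient field tangent to $M$ is precisely the delicate point when $g$ and $f$ are only $C^1$: the distribution $u\mapsto\ker g'(u)$ varies only continuously, so projecting a fixed vector onto $T_uM$ yields a continuous but in general not locally Lipschitz field, and even granted a suitable $V$, its flow computed in the ambient space $E$ leaves $M$, since tangency controls the constraint only to first order. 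You flag this (``the flow genuinely maps $M$ to $M$'') but do not resolve it, and it is not a routine verification: one must either work intrinsically in $C^1$ charts of $M$ (Palais' deformation theory on $C^1$ Finsler manifolds) or add a correction restoring the constraint. As written, the deformation-based core of your proof is therefore incomplete. Your fallback route --- Ekeland's variational principle, applied on $M$ near an almost-optimal path or on the complete metric space $(\Gamma,d_\infty)$, producing a $(P.S)$ sequence at level $c$ --- is the standard way to obtain this proposition at $C^1$ regularity, avoids the tangency/Lipschitz issues entirely, and is in the same spirit as the Ekeland argument the paper itself uses in Lemma \ref{le22} (following Lemma 2.9 of \cite{cfg}); if you develop that alternative carefully, you get a complete proof.
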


\begin{Lemma}\label{le21}
$J_{p}$ satisfies the $(P.S)$ condition on $\mc P$.
\end{Lemma}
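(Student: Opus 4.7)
The plan is to follow the standard recipe for Palais--Smale on a $C^1$-manifold: show boundedness of the sequence in $X_0$, extract a weakly convergent subsequence via reflexivity and the compact embedding $X_0 \hookrightarrow L^2(\Omega)$, and then upgrade weak to strong convergence by combining the vanishing of the constrained gradient with the Hilbert structure of $X_0$.

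First I would establish boundedness. Let $(u_k) \subset \mc P$ satisfy $|J_p(u_k)| \leq C$ and $\|\tilde J_p'(u_k)\|_* \to 0$. Using the constraint $\int_\Omega u_k^2\,dx = 1$ (so that $\int_\Omega (u_k^+)^2\,dx \leq 1$) together with the identity $J_p(u_k) = \|u_k\|_{X_0}^2 - p\int_\Omega (u_k^+)^2\,dx$, I get
\[
\|u_k\|_{X_0}^2 \leq C + p,
\]
so $(u_k)$ is bounded in $X_0$. By reflexivity and the compactness of $X_0 \hookrightarrow L^2(\Omega)$, up to a subsequence $u_k \rightharpoonup u$ weakly in $X_0$ and $u_k \to u$ strongly in $L^2(\Omega)$; continuity of the positive part gives $u_k^+ \to u^+$ in $L^2(\Omega)$, and $u \in \mc P$.

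Next I would identify the Lagrange multipliers. By the definition of $\|\cdot\|_*$, for each $k$ the minimum is attained at some $t_k \in \mb R$ (since $I'(u_k) = 2 u_k \neq 0$ on $\mc P$), and $\|J_p'(u_k) - t_k I'(u_k)\|_{X_0^*} \to 0$. Pairing with the bounded test function $u_k$ and using $\langle J_p'(u_k), u_k \rangle = 2 J_p(u_k)$ and $\langle I'(u_k), u_k\rangle = 2\int_\Omega u_k^2\,dx = 2$, one gets
\[
t_k = J_p(u_k) + o(1),
\]
so $(t_k)$ is bounded and, along a further subsequence, $t_k \to t \in \mb R$.

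Finally I would upgrade to strong convergence. Testing $J_p'(u_k) - t_k I'(u_k)$ against the bounded sequence $u_k - u$ gives
\[
2\int_Q (u_k(x) - u_k(y))\bigl((u_k-u)(x) - (u_k-u)(y)\bigr) K(x-y)\,dx\,dy - 2p\int_\Omega u_k^+(u_k-u)\,dx - 2 t_k \int_\Omega u_k(u_k-u)\,dx \;=\; o(1).
\]
The two $L^2$-integrals vanish in the limit thanks to strong $L^2$-convergence of $u_k$ and $u_k^+$, so the $X_0$-inner product $\langle u_k, u_k - u\rangle_{X_0}$ tends to zero. Combined with $\langle u, u_k - u\rangle_{X_0} \to 0$ from the weak convergence in the Hilbert space $X_0$, we conclude $\|u_k - u\|_{X_0}^2 \to 0$, which is the desired strong convergence. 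The only subtle point is the boundedness of the multipliers $t_k$, which follows from the attainment of the minimum in the definition of $\|\tilde J_p'\|_*$ together with the test against $u_k$; the remainder is a routine application of the compact embedding, closely paralleling the local argument in \cite{cfg}.
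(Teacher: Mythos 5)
Your proof is correct and follows essentially the same route as the paper: boundedness from $J_p(u_k)=\|u_k\|_{X_0}^2-p\int_\Omega(u_k^+)^2\,dx$ and the constraint, extraction of a weak $X_0$ / strong $L^2$ limit, boundedness of the multipliers $t_k$ by testing with $u_k$, and then strong convergence from testing with $u_k-u$ together with weak convergence in the Hilbert space $X_0$. No gaps of substance; the only cosmetic difference is that you make the Lagrange-multiplier bookkeeping explicit where the paper states it more tersely.
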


\begin{proof}
Let $\{u_k\}$ be a $(P.S)$ sequence. i.e., there exists $K>0$ and
$t_k$ such that
\begin{align}
|J_{p}(u_k)|&\leq K,\label{eq3}\\
\int_{Q}(u_k(x)-u_k(y))(v(x)-v(y))& K(x-y)dxdy -  p\int_{\Om}
u^{+}_{k} v - t_k \int_{\Om} u_{k}v  =o_k(1)\|v\|_{X_0}.\label{eq4}
\end{align}
\noi From \eqref{eq3}, we get $u_k$ is bounded in $X_0$. So we may
assume that up to a subsequence $u_k\rightharpoonup u_0$ weakly in
$X_0$, and $u_{k}\ra u_0$ strongly in $L^{2}(\Om)$. Putting $v=u_k$
in \eqref{eq4}, we get $t_k$ is bounded and up to a subsequence
$t_k$ converges to $t$.
 We now claim that $u_k\ra u_0$ strongly in $X_0$. As
$u_k\rightharpoonup u_0$ weakly in $X_0$, we have
\begin{align}\label{eq8}
\int_{Q}(u_k(x)-u_k(y))&(v(x)-v(y)) K(x-y)dxdy\notag\\
&\ra \int_{Q} (u_0(x)-u_0(y))(v(x)-v(y)) K(x-y)dxdy
\end{align}
for all $v\in X_0$. Also $\tilde{J_{p}^{\prime}}(u_k)(u_k-u_0)=
o_k(1)$. Therefore we get
\begin{align*}
\left|\int_{Q}(u_k(x)-u_k(y))^2\right.& \left.K(x-y)dxdy - \int_{Q}
(u_k(x)-u_k(y))(u_0(x)-u_0(y)) K(x-y)dxdy\right|\notag\\
&\leq O(\e_k)+p\|u_{k}^{+}\|_{L^2}
\|u_k-u_0\|_{L^2}+|t_k|\|u_k\|_{L^2}
\|u_k-u_0\|_{L^2}\notag\\
& \ra 0\;\mbox{as}\; k\ra\infty.
\end{align*}

\noi Taking $v=u_0$ in \eqref{eq8}, we get
\begin{align*}
\int_{Q}(u_k(x)-u_k(y))(u_0(x)-u_0(y)) K(x-y)dxdy\ra \int_{Q}
(u_0(x)-u_0(y))^2 K(x-y)dxdy.
\end{align*}
From above two equations, we have
\begin{align*}
\int_{Q}(u_k(x)-u_k(y))^2 K(x-y)dxdy\ra \int_{Q} (u_0(x)-u_0(y))^2
K(x-y)dxdy.
\end{align*}
Thus $\|u_k\|^{2}_{X_0} \ra \|u_0\|^{2}_{X_0}$. Now using this and
$v=u_0$ in \eqref{eq8}, we get
\begin{align*}
\|u_k-u_0\|^{2}_{X_0} &= \|u_k\|^{2}_{X_0}+\|u_k\|^{2}_{X_0}-2\int_{Q}(u_k(x)-u_k(y))(u_0(x)-u_0(y)) K(x-y)dxdy\\
&\lra 0\; \mbox{as}\; k\ra \infty.
\end{align*}
Hence $u_k\ra u_0$ strongly in $X_0$.\QED
\end{proof}

\begin{Lemma}\label{le22}
Let $\e_0>0$  be such that
\begin{align*}
\tilde{J_{p}}(u)>\tilde{J_{p}}(-\phi_1)
\end{align*}
for all $u\in
B(-\phi_1, \e_0)\cap \mc P$ with $u\ne -\phi_1$, where the ball is
taken in $X_0$. Then for any $0<\e<\e_0$,
\begin{align}\label{eq14}
\inf\{\tilde{J_{p}}(u): u\in \mc P\;\mbox{and}\;
\|u-(-\phi_1)\|_{X_0}=\e\}> \tilde{J_{p}}(-\phi_1).
\end{align}
\end{Lemma}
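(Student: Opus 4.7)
My plan is to argue by contradiction, combining the hypothesis (strict local minimality on $\mathcal{P}$ inside the ball of radius $\e_0$) with weak compactness in $X_0$ and the fact that $\tilde{J_p}$ differs from $\|\cdot\|_{X_0}^2$ by a weakly continuous perturbation.

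Denote $m = \tilde{J_p}(-\phi_1) = \la_1$ and $c = \inf\{\tilde{J_p}(u) : u\in \mc P,\; \|u+\phi_1\|_{X_0}=\e\}$. By the hypothesis, every $u$ on the sphere lies in $B(-\phi_1,\e_0)\cap\mc P\setminus\{-\phi_1\}$ (since $\e<\e_0$), so $c\ge m$. Suppose for contradiction that $c=m$, and pick a minimizing sequence $\{u_k\}\subset\mc P$ with $\|u_k+\phi_1\|_{X_0}=\e$ and $\tilde{J_p}(u_k)\to m$. The bound $\|u_k\|_{X_0}\le \e+\|\phi_1\|_{X_0}$ gives, up to a subsequence, $u_k\rightharpoonup u^*$ weakly in $X_0$ and (by the compact embedding $X_0\hookrightarrow L^2(\Om)$) strongly in $L^2(\Om)$. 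The latter yields $u^*\in\mc P$ and also $\int_\Om (u_k^+)^2\,dx\to \int_\Om ((u^*)^+)^2\,dx$. Weak lower semicontinuity of the $X_0$-norm then gives $\tilde{J_p}(u^*)\le \liminf_k \tilde{J_p}(u_k)=m$, and $\|u^*+\phi_1\|_{X_0}\le \e<\e_0$.

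Now I split into two cases. If $u^*\ne -\phi_1$, then $u^*\in B(-\phi_1,\e_0)\cap \mc P \setminus\{-\phi_1\}$, so the hypothesis forces $\tilde{J_p}(u^*)>m$, contradicting the bound $\tilde{J_p}(u^*)\le m$. Hence $u^*=-\phi_1$. But then $(u^*)^+\equiv 0$, so $\int_\Om (u_k^+)^2\,dx\to 0$, and since $\tilde{J_p}(u_k)\to m=\la_1=\|\phi_1\|_{X_0}^2$, rearranging gives
\[
\|u_k\|_{X_0}^2 \;=\; \tilde{J_p}(u_k) + p\int_\Om (u_k^+)^2\,dx \;\longrightarrow\; \la_1 \;=\; \|-\phi_1\|_{X_0}^2.
\]
Combined with $u_k\rightharpoonup -\phi_1$ in the Hilbert space $X_0$, convergence of the norms upgrades weak to strong convergence: $u_k\to -\phi_1$ in $X_0$.

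This contradicts $\|u_k+\phi_1\|_{X_0}=\e>0$ for every $k$, and therefore the assumption $c=m$ is false, proving \eqref{eq14}. The main (mild) obstacle is the passage from weak to strong convergence on the ``bad'' case $u^*=-\phi_1$; the whole argument hinges on the decomposition of $\tilde{J_p}$ into the squared $X_0$-norm minus a functional that is continuous under the compact embedding into $L^2(\Om)$, which is precisely what makes the Palais–Smale-type compactness of Lemma \ref{le21} work, and is exactly what allows us here to convert norm-convergence into strong convergence.
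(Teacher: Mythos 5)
Your proof is correct, but it takes a genuinely different route from the paper's. The paper follows the scheme of Cuesta--de Figueiredo--Gossez: assuming the infimum in \eqref{eq14} equals $\la_1$, it takes a minimizing sequence on the sphere, applies Ekeland's variational principle on the annulus $C=\{u\in\mc P:\e-\de\leq\|u+\phi_1\|_{X_0}\leq\e+\de\}$ to produce a Palais--Smale sequence $v_k$ for $\tilde{J_p}$ on $\mc P$, and then invokes the (P.S) condition of Lemma \ref{le21} to get a limit $v\in\mc P$ with $\e-\de\leq\|v+\phi_1\|_{X_0}\leq\e+\de<\e_0$ and $\tilde{J_p}(v)=\la_1$, contradicting the strict-minimality hypothesis. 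You avoid Ekeland and the (P.S) machinery altogether: you use only the boundedness of a minimizing sequence on the sphere, the compact embedding $X_0\hookrightarrow L^2(\Om)$ (which the paper itself uses in Lemma \ref{le21}), weak lower semicontinuity of the Hilbert norm and of $u\mapsto\|u+\phi_1\|_{X_0}$, and the weak continuity of $u\mapsto\int_\Om(u^+)^2dx$; the only delicate case, when the weak limit is $-\phi_1$ itself, is handled by noting that $\|u_k\|_{X_0}^2=\tilde{J_p}(u_k)+p\int_\Om(u_k^+)^2dx\to\la_1=\|\phi_1\|_{X_0}^2$, so norm convergence upgrades weak to strong convergence in the Hilbert space $X_0$, contradicting $\|u_k+\phi_1\|_{X_0}=\e$. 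Your argument is shorter and more elementary, but it leans on the specific structure of $\tilde{J_p}$ as the squared Hilbert norm minus a weakly continuous term; the paper's Ekeland-based argument is the one that transfers verbatim to non-Hilbertian settings (such as the $p$-Laplacian model of \cite{cfg} it is adapted from) and it recycles the already established Lemma \ref{le21}. Minor points to make explicit if you keep your version: $u^*\in\mc P$ follows from the strong $L^2(\Om)$ convergence, and the hypothesis applies to $u^*$ in your first case because $\|u^*+\phi_1\|_{X_0}\leq\e<\e_0$.
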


\begin{proof}
Assume by contradiction that infimum in \eqref{eq14} is equal to
$\tilde{J_{p}}(-\phi_1)=\la_1$ for some $\e$ with $0<\e<\e_0$. Then
there exists a sequence $u_k\in \mc P$ with
$\|u_k-(-\phi_1)\|_{X_0}=\e$ such that
\begin{align*}
\tilde{J_{p}}(u_k)\leq \la_1 + \frac{1}{2k^2}.
\end{align*}
Consider the set $C =\{u\in\mc P: \e-\de
\leq\|u-(-\phi_1)\|_{X_0}\leq \e+\de\}$, where $\de$ is chosen such
that $\e-\de>0$ and $\e+\de<\e_0$. From \eqref{eq14} and given
hypotheses, it follows that $\inf\{\tilde{J_{p}}(u): u\in
C\}=\la_1.$ Now for each $k$, we apply Ekeland's variational
principle to the functional $\tilde{J_{p}}$ on $C$ to get the
existence of $v_k\in C$ such that
\begin{align*}
\tilde{J_{p}}(v_k)&\leq \tilde{J_{p}}(u_k), \quad
\|v_k-u_k\|_{X_0} \leq \frac{1}{k},\\
\tilde{J_{p}}(v_k)&\leq \tilde{J_{p}}(u) +
\frac{1}{k}\|u-v_k\|_{X_0}\;\fa\; u\in C \label{eq020}
\end{align*}

\noi We claim that $v_k$ is a (P.S) sequence for $\tilde{J_{p}}$ on
$\mc P$ i.e. $\tilde{J_{p}}(v_k)$ is bounded and
$\|\tilde{J}^{\prime}_{p}(v_k)\|_{*}\ra 0$. Once this is proved we
get by Lemma \ref{le21}, up to a subsequence $v_k\ra v$ in $X_0$.
Clearly $v\in \mc P$ and satisfies $\|v-(-\phi_1)\|_{X_0}\leq
\e+\de<\e_0$ and $\tilde{J_{p}}(v)= \la_1$ which contradicts the
given hypotheses. Then proof of the claim can be proved similar as
in Lemma 2.9 of \cite{cfg} by replacing $\|.\|_{1,p}$ by
$\|.\|_{X_0}$.\QED
\end{proof}

\begin{Proposition}\label{prop2.8}
 Let $\e>0$ such that
$\|\phi_1-(-\phi_1)\|_{X_0}>\e$ and
\[\inf\{\tilde{J_p}(u): u\in \mc P \;\mbox{and}\;
\|u-(-\phi_1)\|_{X_0}=\e\}>\max\{\tilde{J_p}(-\phi_1),
\tilde{J_p}(\phi_1)\}.\] Then
$\Gamma =\{\ga \in C([-1,1], \mc P): \ga(-1)= -\phi_1 \;\mbox{and}\;
\ga(1)=\phi_1\}$
is non empty and
\begin{align}\label{eq18}
c(p)=\inf_{\ga \in \Gamma}\max_{u\in\ga[-1,1]} J_{p}(u)
\end{align}
is a critical value of $\tilde{J_{p}}$. Moreover $c(p)>\la_1$.
\end{Proposition}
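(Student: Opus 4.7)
My plan is to apply the abstract mountain pass Proposition \ref{pr01} with $E=X_0$, $g(u)=\int_\Om u^2\,dx$, $f=J_p$, and endpoints $u_0=-\phi_1$, $u_1=\phi_1$ on the constraint manifold $M=\mc P$. The two hypotheses on $\e$ (separation of endpoints and strict inf-versus-max) are built into the statement, and the Palais--Smale condition on $\mc P$ is Lemma \ref{le21}. So the only substantive pieces are (i) showing $\Gamma\ne\emptyset$ by exhibiting an admissible path, and (ii) upgrading the mountain pass conclusion to the strict inequality $c(p)>\la_1$.

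For (i) I will construct an explicit great-circle-type path on $\mc P$. Since $X_0$ is infinite dimensional, I can pick $\psi\in X_0$ with $\int_\Om\psi^2\,dx=1$ and $\int_\Om\psi\phi_1\,dx=0$; for instance, take any $\varphi\in C_c^2(\Om)$ not proportional to $\phi_1$, subtract its $L^2$ projection onto $\phi_1$, and normalize in $L^2$. Define
\[\gamma(t)=\sin\!\left(\tfrac{\pi t}{2}\right)\phi_1+\cos\!\left(\tfrac{\pi t}{2}\right)\psi,\qquad t\in[-1,1].\]
Then $\gamma\in C([-1,1],X_0)$, $\gamma(\pm 1)=\pm\phi_1$, and $\|\gamma(t)\|_{L^2}^2=\sin^2(\pi t/2)+\cos^2(\pi t/2)=1$ by orthonormality (using $\|\phi_1\|_{L^2}=1$). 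Hence $\gamma\in\Gamma$.

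For (ii), since $p\geq 0$ we already have $\tilde J_p(-\phi_1)=\la_1$ and $\tilde J_p(\phi_1)=\la_1-p\leq\la_1$ from the earlier propositions, so $\max\{\tilde J_p(-\phi_1),\tilde J_p(\phi_1)\}=\la_1$ and the standing hypothesis reads
\[m:=\inf\{\tilde J_p(u):u\in\mc P,\ \|u-(-\phi_1)\|_{X_0}=\e\}>\la_1.\]
For any $\gamma\in\Gamma$ the continuous map $t\mapsto\|\gamma(t)-(-\phi_1)\|_{X_0}$ equals $0$ at $t=-1$ and exceeds $\e$ at $t=1$, so by the intermediate value theorem it hits $\e$ at some $t^\ast$. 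Consequently $\max_{u\in\gamma[-1,1]}J_p(u)\geq\tilde J_p(\gamma(t^\ast))\geq m$, and taking the infimum over $\gamma\in\Gamma$ yields $c(p)\geq m>\la_1$. Proposition \ref{pr01} simultaneously delivers that $c(p)$ is a critical value of $\tilde J_p$.

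The only delicate point I anticipate is the path construction in (i), which requires an $L^2$-unit element of $X_0$ orthogonal to $\phi_1$; this is immediate from infinite dimensionality but must be verified to actually sit in $X_0$ and make $\gamma$ continuous into $X_0$ (not merely $L^2$). The compactness input for the mountain pass theorem has already been packaged as Lemma \ref{le21}, and the strict inequality $c(p)>\la_1$ is forced by the strict hypothesis together with the intermediate value theorem, so beyond (i) the argument is essentially routine bookkeeping.
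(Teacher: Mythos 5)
Your proposal is correct and follows essentially the same route as the paper: exhibit an explicit normalized path to show $\Gamma\ne\emptyset$, use Lemma \ref{le21} for the (P.S) condition together with the stated $\e$-hypothesis for the geometry, and apply Proposition \ref{pr01} to get the critical value. Your great-circle path and the explicit intermediate-value argument for $c(p)>\la_1$ are only cosmetic variants of the paper's choice $\ga(t)=\frac{t\phi_1+(1-|t|)\phi}{\|t\phi_1+(1-|t|)\phi\|_{L^2}}$ with $\phi\notin\mb R\phi_1$ and its terse appeal to the definition of $c(p)$.
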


\begin{proof} Let $\phi\in X_0$ be
such that $\phi\not\in \mb R\phi_1$ and consider the path
$\ga(t)=\frac{t\phi_1+(1-|t|)\phi}{\|t\phi_1+(1-|t|)\phi\|_{L^2}}$,
then $\ga(t)\in \Ga$ . Moreover by Lemmas \ref{le21} and \ref{le22},
$\tilde{J_p}$ satisfies (P.S) condition and the geometric
assumptions. Then by Proposition \ref{pr01}, $c(p)$ is a critical
value of $\tilde{J_p}$. Using the definition of $c(p)$ we have
$c(p)>\max\{\tilde{J_p}(-\phi_1), \tilde{J_p}(\phi_1)\}=\la_1$.\QED
\end{proof}
\noi Thus we have proved the following:
\begin{Theorem}\label{th1}
For each $p\geq 0$, the point $(p+c(p),c(p))$, where $c(p)>\la_1$ is
defined by the minimax formula \eqref{eq18}, then the point
$(p+c(p),c(p))$ belongs to $\sum_{K}$.
\end{Theorem}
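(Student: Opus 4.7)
The statement is essentially a repackaging of Proposition \ref{prop2.8} in the language of the Fu\v{c}ik spectrum, combined with the correspondence between critical points of $\tilde{J_p}$ on $\mc P$ and points of $\sum_K$ that was established at the very start of Section 2. My plan therefore has just two conceptual steps.

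First, I would verify the hypotheses of Proposition \ref{prop2.8} and apply it to produce a critical point. Lemma \ref{le22} supplies the mountain--pass geometry around $-\phi_1$: there is $\e > 0$, chosen smaller than the $\e_0$ of Lemma \ref{le22} and smaller than $\|\phi_1-(-\phi_1)\|_{X_0}$, such that the infimum of $\tilde{J_p}$ on the sphere $\{u\in\mc P:\|u-(-\phi_1)\|_{X_0}=\e\}$ strictly exceeds $\la_1 = \tilde{J_p}(-\phi_1)$. Because $\tilde{J_p}(\phi_1) = \la_1 - p \le \la_1$, this infimum automatically dominates $\max\{\tilde{J_p}(-\phi_1),\tilde{J_p}(\phi_1)\}$. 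The projected segment $\ga(t) = (t\phi_1 + (1-|t|)\phi)/\|t\phi_1+(1-|t|)\phi\|_{L^2}$, for any $\phi\not\in\mb R\phi_1$, shows that $\Ga$ is non-empty, and the $(P.S)$ condition on $\mc P$ is given by Lemma \ref{le21}. Proposition \ref{pr01}, applied through Proposition \ref{prop2.8}, then yields a critical point $u_p\in\mc P$ of $\tilde{J_p}$ with $\tilde{J_p}(u_p) = c(p) > \la_1$.

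Second, I would invoke the equivalence established in the first Lemma of Section 2: a point $(p+t,t)\in\mb R^2$ lies in $\sum_K$ if and only if there is a critical point $u\in\mc P$ of $\tilde{J_p}$ with $t = \tilde{J_p}(u)$. Specializing this to $t = c(p)$ and $u = u_p$ gives $(p+c(p),c(p))\in\sum_K$ at once, which is the desired conclusion.

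There is no genuine analytic obstacle remaining at this stage: the substantive work, namely the verification of $(P.S)$ (Lemma \ref{le21}), the strict local minimum analysis at $-\phi_1$ (Lemma \ref{le22}), and the Ekeland--based upgrade of the minimax value to an actual critical value (Proposition \ref{prop2.8}), has already been carried out. The only care required is in the bookkeeping for the choice of $\e$ so that both geometric requirements of Proposition \ref{prop2.8} hold simultaneously, and this is immediate from $p\ge 0$ together with Lemma \ref{le22}.
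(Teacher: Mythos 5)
Your proposal is correct and follows exactly the paper's route: the paper proves this theorem simply by combining Proposition \ref{prop2.8} (whose hypotheses are supplied by Lemmas \ref{le21} and \ref{le22}, the latter resting on the strict local minimum at $-\phi_1$ from Proposition 2.4) with the correspondence between critical points of $\tilde{J_p}$ on $\mc P$ and points of $\sum_{K}$ established in the first Lemma of Section 2. No gaps beyond the minor point that the $\e_0$ in Lemma \ref{le22} is furnished by Proposition 2.4 rather than by Lemma \ref{le22} itself.
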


\noi It is a trivial fact that $\sum_{K}$ is symmetric with respect
to diagonal. The whole curve, that we obtain using Theorem \ref{th1}
and symmetrizing, is denoted by
\[\mc C:= \{(p+c(p),c(p)),(c(p),p+c(p)): p\geq 0\}.\]

\section{First Nontrivial Curve}
We start this section by establishing that the lines $\mb R\times
\{\la_1\}$ and $\{\la_1\}\times \mb R$ are isolated in $\sum_{K}$.
Then we state some topological properties of the functional
$\tilde{J_{p}}$ and finally we prove that the curve $\mc C$
constructed in the previous section is the first nontrivial curve in
the spectrum $\sum_{K}$.

\begin{Proposition}\label{pr1}
The lines $\mb R\times \{\la_1\}$ and $\{\la_1\}\times \mb R$ are
isolated in $\sum_{K}$. In other words, there exists no sequence
$(\al_k, \ba_k)\in \sum_{K}$ with $\al_k
> \la_1$ and $\ba_k > \la_1$ such that $(\al_k, \ba_k)\ra (\al,
\ba)$ with $\al=\la_1$ or $\ba=\la_1$.
\end{Proposition}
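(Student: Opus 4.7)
The plan is a proof by contradiction, along the lines of the isolation argument for the classical Fu\v{c}ik spectrum in \cite{cfg}. By the symmetry of $\sum_{K}$ with respect to the diagonal, it suffices to exclude a sequence $(\al_k,\ba_k)\in\sum_{K}$ with $\al_k,\ba_k>\la_1$, $\al_k\ra\la_1$ and $\ba_k\ra\ba$ for some $\ba\geq\la_1$ (the case $\ba_k\ra\la_1$ is obtained by swapping the roles of $u^{+}$ and $u^{-}$). Let $u_k\in X_0$ be a corresponding nontrivial weak solution of \eqref{eq01}, normalized by $\int_{\Om}u_k^2\,dx=1$. Testing the equation with $v=u_k$ yields $\|u_k\|_{X_0}^{2}=\al_k\int_{\Om}(u_k^{+})^2\,dx+\ba_k\int_{\Om}(u_k^{-})^2\,dx\leq \max\{\al_k,\ba_k\}$, so $\{u_k\}$ is bounded in $X_0$. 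Reflexivity of $X_0$ and the compact embedding $X_0\hookrightarrow L^2(\Om)$ then give, up to a subsequence, $u_k\rightharpoonup u$ in $X_0$ and $u_k\ra u$ in $L^2(\Om)$; in particular $\int_{\Om}u^2\,dx=1$. Passing to the limit in Definition \ref{def1} yields that $u$ solves $-\mc L_K u=\la_1 u^{+}-\ba u^{-}$.

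Next I identify the sign structure. Testing the equation for $u_k$ against $\phi_1$ and using $-\mc L_K\phi_1=\la_1\phi_1$ gives
\begin{align*}
(\al_k-\la_1)\int_{\Om}u_k^{+}\phi_1\,dx=(\ba_k-\la_1)\int_{\Om}u_k^{-}\phi_1\,dx.
\end{align*}
Since $\al_k-\la_1>0$, $\ba_k-\la_1>0$, and $\phi_1>0$ a.e., both $u_k^{+}$ and $u_k^{-}$ are nontrivial, so every $u_k$ changes sign. The analogous test on the limit equation yields $(\ba-\la_1)\int_{\Om}u^{-}\phi_1\,dx=0$. If $\ba>\la_1$, this forces $u\geq 0$ a.e., so $-\mc L_K u=\la_1 u$; simplicity of $\la_1$ and the $L^2$--normalization then give $u=\phi_1$. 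If $\ba=\la_1$, the limit equation reads $-\mc L_K u=\la_1 u$ directly, so $u=\pm\phi_1$, and by the symmetry already exploited we may assume $u=\phi_1$.

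For the final contradiction, since $u_k\ra\phi_1$ in $L^2(\Om)$ and $\phi_1>0$ a.e., the sets $\omega_k:=\{x\in\Om:u_k(x)<0\}$ satisfy $|\omega_k|\ra 0$. Set $v_k:=u_k^{-}/\|u_k^{-}\|_{L^2}$, well defined since each $u_k$ changes sign; then $\int_{\Om}v_k^2\,dx=1$ and $v_k$ is supported in $\omega_k$. Using the elementary pointwise inequality
\begin{align*}
(a-b)(a^{-}-b^{-})\leq -(a^{-}-b^{-})^2\quad\text{for all }a,b\in\mb R
\end{align*}
together with the $u_k$--equation tested against $u_k^{-}$ (and the identity $u_k^{+}u_k^{-}\equiv 0$), one obtains $\|u_k^{-}\|_{X_0}^{2}\leq \ba_k\|u_k^{-}\|_{L^2}^{2}$, hence $\|v_k\|_{X_0}^{2}\leq \ba_k$. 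By the Sobolev embedding $X_0\hookrightarrow L^{2^{*}}(\Om)$ and H\"older's inequality on the support,
\begin{align*}
1=\int_{\Om}v_k^{2}\,dx\leq \|v_k\|_{L^{2^{*}}}^{2}\,|\omega_k|^{\,1-2/2^{*}}\lra 0,
\end{align*}
which is absurd. The main obstacle is producing the uniform $X_0$--bound on $v_k$: this requires the right nonlocal substitute for the local identity $\na u^{-}=-\mathbf 1_{\{u<0\}}\na u$, and the pointwise inequality displayed above plays exactly that role; once it is secured, the shrinking of $|\omega_k|$ combined with the Sobolev embedding closes the argument.
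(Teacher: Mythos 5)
Your proof is correct and follows essentially the same strategy as the paper's: extract a normalized solution sequence, pass to the limit to see it converges in $L^2(\Om)$ to $\pm\phi_1$, and contradict this with a H\"older--Sobolev lower bound on the measure of a nodal set obtained by testing the equation with $u_k^{\pm}$. The only (harmless) variations are that you establish the sign-change of $u_k$ and the identification of the limit by testing against $\phi_1$ and using $\phi_1>0$ a.e., where the paper instead tests the limit equation with $u^{+}$ and uses the variational characterization of $\la_1$, and that after the symmetry reduction you only need the estimate for $u_k^{-}$, via a pointwise inequality equivalent to the paper's identity with the nonnegative cross terms $u^{+}(x)u^{-}(y)$.
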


\begin{proof}
Suppose by contradiction that there exists a sequence $(\al_k,
\ba_k)\in \sum_{K}$ with $\al_k,$ $\ba_k>\la_1$ and $(\al_k,
\ba_k)\ra (\al, \ba)$ with $\al$ or $\ba=\la_1$. Let $u_k\in X_0$ be
a solution of
\begin{align}\label{eq17}
-\mc L_{K}u_k = \al_k u^{+}_k - \ba_k u^{-}_k \;\mbox{in}\;\Om,
\quad u_k=0 \;\mbox{on}\; \mb R^n\setminus \Om
\end{align}
 with $\|u_k\|_{L^2}=1$. Then we have
 \[\int_{Q}(u_{k}(x)-u_{k}(y))^2 K(x-y)dxdy = \al_k\int_{\Om}(u_k^+)^2 dx - \ba_k\int_{\Om}(u_k^-)^2 dx\leq \al_k,\]
which shows that $u_k$ is bounded sequence in $X_0$. Therefore up to
a subsequence $u_k \rightharpoonup u$ weakly in $X_0$ and $u_k\ra u$
strongly in $L^2(\Om)$. Then the limit $u$ satisfies
\[\int_{Q}(u(x)-u(y))^2 K(x-y)dxdy = \la_1\int_{\Om}(u^+)^2 dx - \ba\int_{\Om}(u^-)^2 dx,\]
since $u_k \rightharpoonup u$ weakly in $X_0$ and
$\ld\tilde{J}^{\prime}_{p}(u_k),u_k-u\rd\ra 0$ as $k\ra \infty$. i.e
$u$ is a weak solution of
\begin{align}\label{eq017}
-\mc L_{K}u = \al u^{+} - \ba u^{-}\;\mbox{in}\;\Om,\;\quad \; u=0
\;\mbox{on}\;\mb R^n\setminus \Om
\end{align}
where we have consider the case $\al=\la_1$. Multiplying by $u^+$ in
\eqref{eq017}, integrate, using
\[(u(x)-u(y))(u^{+}(x)-u^{+}(y))=(u^{+}(x)-u^{+}(y))^2+ u^{+}(x)u^{-}(y)+ u^{+}(y)u^{-}(x)\]
and \eqref{e12}, we get
\[\int_{Q}(u^{+}(x)-u^{+}(y))^2 K(x-y)dxdy + 2\int_{Q}u^{+}(x) u^{-}(y) K(x-y)dxdy = \la_1\int_{\Om}(u^+)^2 dx. \]
Using this we have,
\begin{align*}
\la_1\int_{\Om}(u^+)^2 dx\leq \int_{Q}(u^{+}(x)-u^{+}(y))^2
K(x-y)dxdy \leq \la_1\int_{\Om}(u^+)^2 dx
\end{align*}
Thus \[\int_{Q}(u^{+}(x)-u^{+}(y))^2 K(x-y)dxdy=
\la_1\int_{\Om}(u^+)^2 dx,\] so that either $u^+\equiv 0$ or
$u=\phi_1$. If $u^+\equiv 0$ then $u\leq 0$ and \eqref{eq017}
implies that $u$ is an eigenfunction with $u\leq 0$ so that $u=
-\phi_1$. So in any case $u_k$ converges to either $\phi_1$ or
$-\phi_1$ in $L^{p}(\Om)$. Thus for every $\e>0$
\begin{align}\label{eq019}
\mbox{either} \;|\{x\in \Om : u_k(x)\leq \e\}|\ra 0\; \mbox{ or}\;
|\{x\in\Om :u_k(x)\geq \e\}|\ra 0.
\end{align}
 On the other hand, taking $u_k^+$ as test function in
\eqref{eq17}, we get
\begin{align*}
\int_{Q}(u^{+}_k(x)-u^{+}_k(y))^2 K(x-y)dxdy+
2\int_{Q}u^{+}_k(x)u^{-}_k(y) K(x-y)dxdy =
\al_k\int_{\Om}(u_{k}^+)^2 dx.
\end{align*}
 Using this, H\"{o}lders inequality and Sobolev
embeddings we get
\begin{align*}
\int_{Q}(u^{+}_k(x)-&u^{+}_k(y))^2 K(x-y)dxdy\\
&\leq \int_{Q}(u^{+}_k(x)-u^{+}_k(y))^2 K(x-y)dxdy+
2\int_{Q}u^{+}_k(x)u^{-}_k(y) K(x-y)dxdy\\
&= \al_k\int_{\Om}(u_{k}^+)^2 dx\\
&\leq \al_k C |\{x\in \Om :
u_k(x)>0\}|^{1-\frac{2}{q}}\|u_k^+\|_{X_0}^{2}
\end{align*}
 with a constant $C>0$, $2<q\leq 2^*=\frac{2n}{n-2s}$. Then we have
\[ |\{x\in \Om :
u_k(x)>0\}|^{1-\frac{2}{q}} \geq \al_k ^{-1}C^{-1}.\] Similarly, one
can show that
\[|\{x\in \Om :
u_k(x)<0\}|^{1-\frac{2}{q}}\geq \ba_k ^{-1}C^{-1}.\] Since
$(\al_k,\ba_k)$ does not belong to the trivial lines $\la_1\times\mb
R$ and $\mb R\times \la_1$ of $\sum_{K}$, by using \eqref{eq17} we
have that $u_k$ changes sign. Hence, from the above inequalities, we
get a contradiction with \eqref{eq019}. Hence the trivial lines
$\la_1\times \mb R$ and $\mb R\times \la_1$ are isolated in
$\sum_{K}$.
\end{proof}
\begin{Lemma}\label{le1}\cite{cfg}
Let $\mc P= \{u\in X_0 : \int_{\Om}u^2 =1\}$ then
\begin{enumerate}
\item $\mc P$ is locally arcwise connected.
\item Any open connected subset $\mc O$ of $\mc P$ is arcwise
connected.
\item If $\mc O^{'}$ is any connected component of an open set $\mc O\subset \mc P$, then $\partial \mc O^{\prime}\cap \mc O=
\emptyset$.
\end{enumerate}
\end{Lemma}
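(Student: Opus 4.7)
\textbf{Proof proposal for Lemma \ref{le1}.}

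The plan is to treat $\mc P$ as a ``generalized sphere'' inside the Hilbert space $X_0$, namely the preimage of $1$ under the $C^1$ functional $u\mapsto \|u\|_{L^2(\Om)}^2$, and exploit the embedding $X_0\hookrightarrow L^2(\Om)$ to build explicit paths by radial projection. First, for part (1), fix $u_0\in\mc P$ and a radius $r>0$ so small that every $v\in B(u_0,r)\cap\mc P$ satisfies $\|v\|_{L^2}\geq 1/2$, say. Given any two $v_1,v_2\in B(u_0,r)\cap\mc P$, define the projected segment
\[
\ga(t)=\frac{(1-t)v_1+tv_2}{\|(1-t)v_1+tv_2\|_{L^2}},\qquad t\in[0,1].
\]
The continuity of the embedding $X_0\hookrightarrow L^2(\Om)$ keeps the $L^2$-norm of the numerator bounded below by a positive constant (shrink $r$ if necessary), so $\ga$ is a well-defined continuous path in $\mc P$ joining $v_1$ and $v_2$, and it stays in $B(u_0,r')\cap\mc P$ for a slightly larger $r'$. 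This gives local arcwise connectedness.

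For part (2), I would use the standard topological argument: in a locally arcwise connected space, every connected open set is arcwise connected. Concretely, fix $u_0\in\mc O$ and let $A\subset\mc O$ denote the set of points that can be joined to $u_0$ by a continuous path inside $\mc O$. By local arcwise connectedness of $\mc P$ (applied inside the open set $\mc O$), both $A$ and $\mc O\setminus A$ are open in $\mc O$; since $u_0\in A$ and $\mc O$ is connected, $A=\mc O$.

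For part (3), the key observation is that local arcwise connectedness implies local connectedness, and in a locally connected space the connected components of any open set are themselves open. Thus $\mc O'$ is open in $\mc P$, and as a connected component it is also relatively closed in $\mc O$. Suppose for contradiction that $u\in\partial\mc O'\cap\mc O$; then $u\in\mc O\setminus\mc O'$ (since $\mc O'$ is open, $\partial\mc O'\cap\mc O'=\emptyset$), yet every neighborhood of $u$ meets $\mc O'$. Choose, by part (1), an arcwise connected neighborhood $U\subset\mc O$ of $u$ and a point $v\in U\cap\mc O'$. The arc from $u$ to $v$ inside $U\subset\mc O$ forces $u$ and $v$ to lie in the same connected component of $\mc O$, i.e.\ $u\in\mc O'$, contradicting $u\in\mc O\setminus\mc O'$.

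The only mildly delicate step is (1), where one must verify that the projection map $w\mapsto w/\|w\|_{L^2}$ is continuous from a neighborhood of $u_0$ in $(X_0,\|\cdot\|_{X_0})$ into $(\mc P,\|\cdot\|_{X_0})$. This uses exactly the continuity of the inclusion $X_0\hookrightarrow L^2(\Om)$ recorded in the Remark after the definition of $X_0$; once that continuity is in hand, parts (2) and (3) are purely point-set topological consequences that carry over verbatim from the classical setting used in \cite{cfg}.
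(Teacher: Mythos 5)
Your proof is correct, and it is essentially the standard argument behind the result the paper simply imports from \cite{cfg} without reproducing a proof: radial projection of convex segments (using the embedding $X_0\hookrightarrow L^2(\Om)$ to keep the normalizing $L^2$-norm bounded below) gives local arcwise connectedness, and parts (2) and (3) then follow by the usual point-set topology of locally (arcwise) connected spaces. Nothing further is needed.
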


\begin{Lemma}\label{le01}
Let $\mc O=\{u\in \mc P : \tilde{J_p}(u)<r\}$, then any connected
component of $\mc O$ contains a critical point of $\tilde{J_p}$.
\end{Lemma}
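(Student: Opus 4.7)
The plan is to apply Ekeland's variational principle to $\tilde{J_p}$ restricted to a connected component $\mc O'$ of $\mc O$, combined with the $(P.S)$ condition from Lemma \ref{le21} and the topological properties from Lemma \ref{le1}. The minimum of $\tilde{J_p}$ on $\mc O'$ will furnish the desired critical point.

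First I would observe that $\mc O = \tilde{J_p}^{-1}((-\infty,r))$ is open in $\mc P$ by continuity of $\tilde{J_p}$. Since $\mc P$ is locally arcwise connected (Lemma \ref{le1}(1)), it is locally connected, so every connected component $\mc O'$ of the open set $\mc O$ is itself open in $\mc P$. Next, from the lower bound $\tilde{J_p}(u) \geq \la_1 - p$ established in the proof of Proposition 2.3, the quantity
\[ m := \inf\{\tilde{J_p}(u) : u \in \mc O'\} \]
is finite, and clearly $m < r$ since every element of $\mc O'$ has value strictly less than $r$.

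Now I would pick a minimizing sequence $u_k \in \mc O'$ with $\tilde{J_p}(u_k) \leq m + \frac{1}{k^2}$, and apply Ekeland's variational principle on the complete metric space $\overline{\mc O'} \cap \mc P$ (closed in $X_0$) to obtain $v_k \in \overline{\mc O'} \cap \mc P$ with $\tilde{J_p}(v_k) \leq \tilde{J_p}(u_k)$, $\|v_k - u_k\|_{X_0} \leq \frac{1}{k}$, and
\[ \tilde{J_p}(v_k) \leq \tilde{J_p}(w) + \tfrac{1}{k}\|w - v_k\|_{X_0} \quad \forall\, w \in \overline{\mc O'} \cap \mc P. \]
The key observation is that $\tilde{J_p}(v_k) \leq m + \frac{1}{k^2} < r$ for $k$ large, so $v_k \in \mc O$. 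Combining $v_k \in \mc O \cap \overline{\mc O'}$ with Lemma \ref{le1}(3) (which gives $\pa \mc O' \cap \mc O = \emptyset$) forces $v_k \in \mc O'$. Since $\mc O'$ is open in $\mc P$, each $v_k$ admits a neighborhood inside $\mc O'$, so the Ekeland inequality automatically extends to all $w \in \mc P$ sufficiently close to $v_k$.

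From this I would deduce that $\{v_k\}$ is a $(P.S)$ sequence for $\tilde{J_p}$ on $\mc P$ by the same projection-onto-the-tangent-space argument used in Lemma \ref{le22}: for $w \in X_0$ with $\int_\Om v_k w\,dx = 0$, the curve $u_t = (v_k + tw)/\|v_k + tw\|_{L^2}$ lies in $\mc P$ near $v_k$ for small $t$, so inserting $u_t$ into the Ekeland inequality and differentiating at $t=0$ yields $|\langle J_p'(v_k), w\rangle| \leq \frac{1}{k}\|w\|_{X_0}$, and then subtracting the component along $v_k$ gives $\|\tilde{J_p}'(v_k)\|_* \to 0$. Lemma \ref{le21} then provides a subsequence $v_k \to v$ in $X_0$, whence $v \in \overline{\mc O'} \cap \mc P$, $\tilde{J_p}(v) = m < r$, and invoking Lemma \ref{le1}(3) once more yields $v \in \mc O'$. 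As $\|\tilde{J_p}'(v)\|_* = 0$, $v$ is the desired critical point.

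The main obstacle I anticipate is the careful use of Lemma \ref{le1}(3) to trap both the Ekeland iterates $v_k$ and the limit $v$ inside the component $\mc O'$ (rather than on its boundary), which hinges crucially on the strict inequality $m < r$; the rest is a routine adaptation of the $(P.S)$ derivation already carried out in Lemma \ref{le22}.
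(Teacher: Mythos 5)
Your argument is correct and is essentially the paper's own proof: the paper simply defers to Lemma 3.6 of \cite{cfg}, whose argument is precisely the one you reconstruct (minimize $\tilde{J_p}$ over a component, apply Ekeland's variational principle, extract a $(P.S)$ sequence via the tangent-space computation of Lemma \ref{le22}, conclude with Lemma \ref{le21}, and use $\partial\mc O'\cap\mc O=\emptyset$ from Lemma \ref{le1} together with the strict inequality at level $m<r$ to keep the Ekeland points and the limit inside the component). No substantive difference in approach.
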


\begin{proof}  Proof follows in the same lines as  Lemma 3.6 of \cite{cfg} by replacing $\|.\|_{1,p}$ by $\|.\|_{X_0}.$
\QED
\end{proof}
\begin{Theorem}\label{3.4}
Let $p\geq 0$ then the point $(p+c(p),c(p))$ is the first nontrivial
point in the intersection between $\sum_{K}$ and the line
$(p,0)+t(1,1)$.
\end{Theorem}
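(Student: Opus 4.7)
The plan is to proceed by contradiction. Suppose there exists $(p+t,t)\in\sum_K$ with $\la_1 < t < c(p)$. By the correspondence established in the first Lemma of Section~2, this produces a critical point $u\in\mc P$ of $\tilde{J_{p}}$ with $\tilde{J_{p}}(u)=t$. Two preliminary facts are clear: first, $u\neq\pm\phi_1$, because $\tilde{J_{p}}(\phi_1)=\la_1-p\leq\la_1$ and $\tilde{J_{p}}(-\phi_1)=\la_1$ are both strictly less than $t$; second, $u$ must change sign, since if $u\geq 0$ (respectively $u\leq 0$) the equation \eqref{eq01} would reduce to an eigenvalue problem for $-\mc L_K$ with a sign-definite eigenfunction and eigenvalue $p+t>\la_1$ (respectively $t>\la_1$), contradicting the simplicity of $\la_1$.

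Fix $r\in(t,c(p))$ and set $\mc O:=\{v\in\mc P:\tilde{J_{p}}(v)<r\}$, an open subset of $\mc P$ containing $\phi_1$, $-\phi_1$, and $u$. My aim is to construct a continuous arc from $-\phi_1$ to $\phi_1$ lying entirely in $\mc O$. Any such arc belongs to $\Gamma$, so the minimax formula \eqref{eq18} would yield $c(p)\leq\max\tilde{J_{p}}<r<c(p)$, the desired contradiction.

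I would build this arc by concatenating three pieces routed through $u$: a first arc from $-\phi_1$ to $-u^-/\|u^-\|_{L^2}$ obtained as the normalized convex combination of these two non-positive elements of $\mc P$; a middle arc
\[
\tau(s) = \frac{s u^+ - (1-s) u^-}{\|s u^+ - (1-s) u^-\|_{L^2}},\qquad s\in[0,1],
\]
joining $-u^-/\|u^-\|_{L^2}$ to $u^+/\|u^+\|_{L^2}$ and passing through $u$ at an appropriate $s^\star$; and a final arc from $u^+/\|u^+\|_{L^2}$ to $\phi_1$ via a normalized convex combination of non-negative elements. On the outer arcs the function is sign-definite, so the correction term $-p\int(v^+)^2$ either vanishes or is controlled. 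Testing the Euler--Lagrange equation for $u$ against $u^+$ and using $u^+u^-\equiv 0$ with the symmetry \eqref{e12} yields the key identity
\[
\tilde{J_{p}}\!\left(\frac{u^+}{\|u^+\|_{L^2}}\right) = t - \frac{2}{\|u^+\|_{L^2}^2}\int_{Q} u^+(x)u^-(y) K(x-y)\, dx\, dy < t,
\]
and an analogous bound for $-u^-/\|u^-\|_{L^2}$; combined with convexity of the quadratic form $v\mapsto\int_Q(v(x)-v(y))^2 K$ and the characterization of $\la_1$ as its minimum on $\mc P$, this keeps $\tilde{J_{p}}<r$ along the outer arcs.

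The main obstacle is controlling $\tilde{J_{p}}(\tau(s))$ on the middle arc. Using the decomposition
\[
\int_Q(v(x)-v(y))^2 K = \int_Q(v^+(x)-v^+(y))^2 K + \int_Q(v^-(x)-v^-(y))^2 K + 4\int_Q v^+(x) v^-(y) K,
\]
(valid by $v^+v^-\equiv 0$ and \eqref{e12}), $\tilde{J_{p}}(\tau(s))$ becomes an explicit rational function of $s$. Since $u$ is a critical point of $\tilde{J_{p}}|_{\mc P}$, it is in particular a critical point of its restriction to the curve $\{\tau(s)\}$; hence $s\mapsto\tilde{J_{p}}(\tau(s))$ has an interior critical point at $s^\star$ with value $t$, and I expect direct inspection of the derivative to show this is the unique interior maximum. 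Because the boundary values at $s=0,1$ are bounded by $t$ (by the identity above), this gives $\tilde{J_{p}}(\tau(s))\leq t<r$ throughout. Concatenating the three arcs produces the required path in $\mc O$, contradicting the minimax definition of $c(p)$.
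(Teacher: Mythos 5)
Your strategy is viable and, at the decisive step, genuinely different from the paper's; most of it can be made rigorous. Your middle arc is precisely the union of the paper's paths $u_1$ and $u_3$: testing the equation for $u$ against $u^{\pm}$ gives $\int_Q(u^+(x)-u^+(y))^2K+2\int_Q u^+(x)u^-(y)K=(p+t)\|u^+\|_{L^2}^2$ and $\int_Q(u^-(x)-u^-(y))^2K+2\int_Q u^+(x)u^-(y)K=t\|u^-\|_{L^2}^2$, from which one computes explicitly $\tilde{J_p}(\tau(s))=t-\frac{2(2s-1)^2\int_Q u^+(x)u^-(y)K(x-y)dxdy}{s^2\|u^+\|_{L^2}^2+(1-s)^2\|u^-\|_{L^2}^2}\le t$; so what you only ``expect'' from inspecting the derivative is in fact an elementary identity, and the boundary values are the ones you state.

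The genuine gap is the justification of the outer arcs. You claim $\tilde{J_p}<r$ along the normalized segment joining $u^+/\|u^+\|_{L^2}$ to $\phi_1$ ``by convexity of the quadratic form and the characterization of $\la_1$ as its minimum''. That argument does not close: convexity bounds the numerator $\|(1-s)a+s\phi_1\|_{X_0}^2$ by $(1-s)\|a\|_{X_0}^2+s\la_1$, but after normalization the denominator $\|(1-s)a+s\phi_1\|_{L^2}^2=(1-s)^2+2s(1-s)\int_\Om a\phi_1+s^2$ is in general strictly less than $1$, so the Rayleigh quotient along the segment is not controlled by the endpoint values through convexity alone (near $s=1/2$ the convexity estimate can exceed the larger endpoint value). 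The conclusion is nevertheless true, for a reason you must invoke explicitly and which is special to the linear setting: since $\phi_1$ is an eigenfunction, $\langle v,\phi_1\rangle_{X_0}=\la_1\int_\Om v\phi_1$ for all $v\in X_0$, so for $a\ge 0$ in $\mc P$ the quotient at $(1-s)a+s\phi_1$ is a weighted average of $\|a\|_{X_0}^2$ and $\la_1$ with the nonnegative weights $(1-s)^2$ and $2s(1-s)\int_\Om a\phi_1+s^2$, hence is $\le\|a\|_{X_0}^2$; as the segment consists of nonnegative (resp. nonpositive) functions, $\tilde{J_p}$ along it equals this quotient minus $p$ (resp. the quotient), hence is $\le\tilde{J_p}(u^+/\|u^+\|_{L^2})\le t$ (resp. $\le\tilde{J_p}(-u^-/\|u^-\|_{L^2})\le t$). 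With this repair your concatenated path lies at level $\le t<c(p)$ and contradicts \eqref{eq18} directly.

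For comparison, the paper never connects $u^{\pm}$ to $\pm\phi_1$ by straight segments: it chooses the contradiction level $\mu$ minimal (which requires Proposition \ref{pr1} on the isolation of the trivial lines and the closedness of $\sum_{K}$), drops into the sublevel set $\{\tilde{J_p}<\mu-p\}$, and uses Lemma \ref{le01} (every component of a sublevel set contains a critical point) together with the estimate $|\tilde{J_p}(v)-\tilde{J_p}(-v)|\le p$ to produce the connecting paths $u_4$ and $-u_4$. That machinery is what survives for the $p$-Laplacian in \cite{cfg}, where the eigenfunction identity does not linearize the cross terms; your argument, once the outer arcs are justified as above, exploits the linearity of $-\mc L_K$ and yields a shorter proof that bypasses Proposition \ref{pr1} and Lemma \ref{le01} altogether.
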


\begin{proof}
Assume by contradiction that there exists $\mu$ such that
$\la_1<\mu<c(p)$ and $(p+\mu,\mu)\in \sum_{K}$. Using the fact that
$\{\la_1\}\times \mb R$ and $\mb R\times \{\la_1\}$ are isolated in
$\sum_{K}$ and $\sum_{K}$ is closed we can choose such a point with
$\mu$ minimum. In other words $\tilde{J_{p}}$ has a critical value
$\mu$ with $\la_1<\mu<c(p)$, but there is no critical value in
$(\la_1,\mu)$. If we construct a path connecting from $-\phi_1$ to
$\phi_1$ such that $\tilde{J_p}\leq \mu$, then we get a
contradiction with the definition of $c(p)$, which completes the
proof.

\noi Let $u\in \mc P$ be a critical point of $\tilde{J_p}$ at level
$\mu$. Then $u$ satisfies,
\[ \int_{Q} (u(x)-u(y))(v(x)-v(y)) K(x-y) dxdy = (p+\mu)\int_{\Om}u^{+} vdx -\mu \int_{\Om}u^{-}vdx.\]
for all $v\in X_0$. Replacing $v$ by $u^{+}$ and $u^-$, we have
\[ \int_{Q} (u^+(x)-u^{+}(y))^2 K(x-y) dxdy+ 2 \int_{Q}
u^+(x)u^-(y)K(x-y) dxdy= (p+\mu)\int_{\Om}(u^{+})^2dx, \] and
\[ \int_{Q} (u^-(x)-u^-(y))^2 K(x-y) dxdy+ 2 \int_{Q}
u^+(x)u^-(y)K(x-y) dxdy= \mu\int_{\Om}(u^{-})^2dx .\] Thus we
obtain,
\[\tilde{J_{p}}(u)=\mu,\quad \; \tilde{J_{p}}\left(\frac{u^+}{\|u^+\|_{L^2}}\right)
= \mu -\frac{2 \int_{Q} u^+(x)u^-(y)K(x-y) dxdy}{\|u^+\|^2_{L^2}},\]
\[\tilde{J_{p}}\left(\frac{u^-}{\|u^-\|_{L^2}}\right)= \mu -p -\frac{2 \int_{Q} u^+(x)u^-(y)K(x-y) dxdy}{\|u^-\|^{2}_{L^2}},\]
and
\[\tilde{J_{p}}\left(-\frac{u^-}{\|u^-\|_{L^2}}\right)= \mu -\frac{2 \int_{Q} u^+(x)u^-(y)K(x-y) dxdy}{\|u^-\|^{2}_{L^2}}.\]
Since $u$ changes sign, the following paths are well-defined on $\mc
P$:
\[u_1(t)=\frac{(1-t)u+ t u^+}{\|(1-t)u+ tu^+\|_{L^2}},\quad u_2(t)=\frac{tu^{-}+(1-t)u^{+}}{\|tu^{-}+(1-t)u^{+}\|_{L^2}},\]
\[u_3(t)=\frac{-tu^{-}+(1-t)u}{\|-tu^{-}+(1-t)u\|_{L^2}}.\]
Then by using above calculation one can easily get that for all
$t\in[0,1]$,
\begin{align*}
\tilde{J_{p}}(u_1(t))&=
\frac{\int_{Q}[(u^+(x)-u^+(y))^2+(1-t)^2(u^-(x)-u^-(y))^2]
K(x-y)dxdy}{\|u^{+}-(1-t)u^{-}\|^{2}_{L^2}}\\
&\quad\quad\quad +\frac{4(1-t)\int_{Q}u^-(x)u^-(y)K(x-y)
dxdy-p\int_{\Om}(u^{+})^2 dx}{\|u^{+}-(1-t)u^{-}\|^{2}_{L^2}}\\
&= \mu -\frac{2 t^2 \int_{Q} u^+(x)u^-(y)K(x-y)
dxdy}{\|u^{+}-(1-t)u^{-}\|^{2}_{L^2}}.
\end{align*}
\begin{align*}
\ds \tilde{J_{p}}(u_2(t))&=\frac{\int_{Q}[(1-t)^2(u^+(x)-u^+(y))^2+
t^2(u^-(x)-u^-(y))^2]
K(x-y)dxdy}{\|(1-t)u^{+}+tu^{-}\|^{2}_{L^2}}\\
&-\frac{4t(1-t)\int_{Q}u^+(x)u^-(y)K(x-y)
dxdy+p(1-t)^2\int_{\Om}(u^{+})^2 +pt^2\int_{\Om} (u^-)^2}{\|(1-t)u^{+}+tu^{-}\|^{2}_{L^2}}\\
&= \mu -\frac{2 \int_{Q} u^+(x)u^-(y)K(x-y) dxdy}{\|(1-t)u^+ +
tu^{-}\|^{2}_{L^2}}- \frac{p t^2 \int_{\Om}(u^{-})^2 dx}
{\|(1-t)u^{+}+ t u^{-}\|^{2}_{L^2}}.
\end{align*}

\begin{align*}
\tilde{J_{p}}(u_3(t))&=
\frac{\int_{Q}[(1-t)^2(u^+(x)-u^+(y))^2+(u^-(x)-u^-(y))^2]
K(x-y)dxdy}{\|(1-t)u^{+}-u^{-}\|^{2}_{L^2}}\\
&\quad\quad\quad +\frac{4(1-t)\int_{Q}u^+(x)u^-(y)K(x-y)
dxdy-p(1-t)^2\int_{\Om}(u^{+})^2 dx}{\|(1-t)u^{+}-u^{-}\|^{2}_{L^2}}\\
&= \mu -\frac{2 t^2 \int_{Q} u^+(x)u^{-}(y)K(x-y) dxdy}{\|(1-t)u^{+}
-u^-\|^{2}_{L^2}}.
\end{align*}
 Let $\mc O = \{v\in\mc P: \tilde{J_p}(v)<\mu-p\}$. Then clearly $\phi_1\in
\mc O$, while $-\phi_1\in \mc O$ if $\mu- p>\la_1$. Moreover
$\phi_1$ and $-\phi_1$ are the only possible critical points of
$\tilde{J_p}$ in $\mc O$ because of the choice of $\mu$.

\noi We note that
$\tilde{J_p}\left(\frac{u^-}{\|u^-\|_{L^2}}\right)\leq \mu-p$,
$\frac{u^-}{\|u^-\|_{L^2}} $ does not change sign and vanishes on a
set of positive measure, it is not a critical point of
$\tilde{J_p}$. Therefore there exists a $C^1$ path $\eta:[-\e,\e]\ra
\mc P$ with $\eta(0)= \frac{u^-}{\|u^-\|_{L^2}}$ and
$\frac{d}{dt}\tilde{J_p}(\eta(t))|_{t=0}\ne 0$. Using this path we
can move from $\frac{u^-}{\|u^-\|_{L^2}}$ to a point $v$ with
$\tilde{J_p}(v)<\mu-p$. Taking a connected component of $\mc O$
containing $v$ and applying Lemma \ref{le01} we have that either
$\phi_1$ or $-\phi_1$ is in this component. Let us assume that it is
$\phi_1$. So we continue by a path $u_{4}(t)$ from
$\left(\frac{u^-}{\|u^-\|_{L^2}}\right)$ to $\phi_1$ which is at
level less than $\mu$. Then the path $-u_{4}(t)$ connects
$\left(-\frac{u^-}{\|u^-\|_{L^2}}\right)$ to $-\phi_1$. We observe
that
\[|\tilde{J_p}(u)- \tilde{J_p}(-u)|\leq p.\]
Then it follows that
\[\tilde{J_p}(-u_4(t))\leq \tilde{J_p}(u_4(t))+p\leq \mu-p+p= \mu \;\forall\; t.\]
Connecting $u_1(t)$, $u_2(t)$ and $u_4(t)$, we get a path from $u$
to $\phi_1$ and joining $u_3(t)$ and $-u_4(t)$ we get a path from
$u$ to $-\phi_1$. These yields a path $\ga(t)$ on $\mc P$ joining
from $-\phi_1$ to $\phi_1$ such that $\tilde{J_p}(\ga(t))\leq \mu$
for all $t$, which concludes the proof.\QED
\end{proof}
\begin{Corollary}
The second eigenvalue $\la_2$ of \eqref{eq02} has the variational
characterization given as
\[\la_2=\inf_{\ga\in\Ga}\max_{u\in\ga[-1,1]}\int_{Q}(u(x)-u(y))^2K(x-y) dxdy,\]
where $\Ga$ is same as in Proposition \ref{prop2.8}.
\end{Corollary}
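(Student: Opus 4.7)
The plan is to specialize Theorem \ref{3.4} to the case $p=0$ and read off the result directly. When $p=0$, the functional $J_p$ reduces to $J_0(u)=\int_{Q}(u(x)-u(y))^2K(x-y)\,dxdy$, so the minimax value $c(0)$ defined by \eqref{eq18} is precisely the right-hand side of the claimed formula restricted to paths in $\mc P$ joining $-\phi_1$ to $\phi_1$. Thus the whole proof reduces to identifying $c(0)$ with $\la_2$.

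First I would observe that on the diagonal, the Fu\v{c}ik condition $-\mc L_K u=\al u^+-\ba u^-$ with $\al=\ba$ collapses to the eigenvalue equation $-\mc L_K u=\la u$. Hence for any $\la>0$, the point $(\la,\la)$ belongs to $\sum_K$ if and only if $\la$ is an eigenvalue of $-\mc L_K$. Apply Theorem \ref{3.4} with $p=0$: the line $(0,0)+t(1,1)$ is exactly the diagonal, so $(c(0),c(0))$ is the first nontrivial point of $\sum_K$ on the diagonal beyond $(\la_1,\la_1)$. By the observation just made, this means $c(0)$ is the smallest eigenvalue of $-\mc L_K$ strictly greater than $\la_1$.

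Next I would pin this down as $\la_2$. Since $(\la_2,\la_2)\in \sum_K$ by the introduction's remark on diagonal points, Theorem \ref{3.4} yields $c(0)\le \la_2$. Conversely, Theorem \ref{3.4} asserts that no point $(t,t)$ with $\la_1<t<c(0)$ lies in $\sum_K$, so no eigenvalue of $-\mc L_K$ lives in $(\la_1,c(0))$, forcing $\la_2\ge c(0)$. Combining the two inequalities gives $c(0)=\la_2$, and substituting into \eqref{eq18} produces the stated variational characterization.

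I do not expect a serious obstacle here: the substance of the work is already packaged in Theorem \ref{3.4} together with the symmetry remark $(\la_k,\la_k)\in\sum_K$ from the introduction. The only thing to double-check is that the minimax class $\Ga$ from Proposition \ref{prop2.8} is independent of $p$ (which is immediate from its definition, as it only refers to paths in $\mc P$ from $-\phi_1$ to $\phi_1$) and that the $p=0$ case of the geometry (the mountain pass landscape around $\pm\phi_1$) still applies; both Lemmas \ref{le21} and \ref{le22} and Proposition \ref{prop2.8} were established for all $p\ge 0$, so $p=0$ is admissible.
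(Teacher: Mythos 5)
Your proposal is correct and follows essentially the same route as the paper, which simply invokes Theorem \ref{3.4} with $p=0$ (the paper's proof has a typo, saying ``$s=0$'') and notes $c(0)=\la_2$ together with \eqref{eq18}. You merely spell out the details the paper leaves implicit: that diagonal points of $\sum_K$ correspond exactly to eigenvalues, that $(\la_2,\la_2)\in\sum_K$ gives $c(0)\le\la_2$, and that Theorem \ref{3.4} excludes eigenvalues in $(\la_1,c(0))$, giving $c(0)=\la_2$.
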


\begin{proof}
Take $s=0$ in Theorem \ref{3.4}. Then we have $c(0)=\la_2$ and
\eqref{eq18} concludes the proof.\QED
\end{proof}
\section{Properties of the curve}
In this section we prove that the curve $\mc C$ is Lipschitz
continuous, has a certain asymptotic behavior and strictly
decreasing.

\begin{Proposition}
The curve $p\ra (p+c(p), c(p))$, $p\in \mb R^+$ is Lipschitz
continuous.
\end{Proposition}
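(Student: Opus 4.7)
The plan is to show directly that $c: \mathbb{R}^+ \to \mathbb{R}$ is Lipschitz (in fact $1$-Lipschitz and nonincreasing), from which the Lipschitz continuity of the curve $p \mapsto (p+c(p), c(p))$ is immediate, since both components are Lipschitz functions of $p$.

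First I would fix $0 \le p_1 < p_2$ and, for any $u \in \mc P$, compare the two functionals. Since the quadratic term $\int_Q (u(x)-u(y))^2 K(x-y)\,dx\,dy$ is independent of $p$, one obtains the pointwise identity
\[
\tilde J_{p_1}(u) - \tilde J_{p_2}(u) = (p_2 - p_1)\int_\Om (u^+)^2\, dx.
\]
Because $u \in \mc P$ satisfies $\int_\Om u^2 = 1$, we have $0 \le \int_\Om (u^+)^2\,dx \le 1$, so
\[
0 \le \tilde J_{p_1}(u) - \tilde J_{p_2}(u) \le p_2 - p_1 \qquad \forall\, u \in \mc P.
\]

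Next I would lift this to the minimax level. Since $\Gamma = \{\ga \in C([-1,1],\mc P) : \ga(-1)=-\phi_1,\,\ga(1)=\phi_1\}$ does not depend on $p$, for each $\ga \in \Gamma$ the pointwise estimate gives
\[
\max_{u \in \ga[-1,1]} \tilde J_{p_2}(u) \;\le\; \max_{u \in \ga[-1,1]} \tilde J_{p_1}(u) \;\le\; \max_{u \in \ga[-1,1]} \tilde J_{p_2}(u) + (p_2 - p_1).
\]
Taking the infimum over $\ga \in \Gamma$ in each expression, using the definition \eqref{eq18}, yields
\[
c(p_2) \;\le\; c(p_1) \;\le\; c(p_2) + (p_2 - p_1),
\]
so $c$ is monotone nonincreasing and $1$-Lipschitz: $|c(p_2) - c(p_1)| \le |p_2 - p_1|$.

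Finally, writing the curve as $p \mapsto (p + c(p),\, c(p))$, both coordinates are Lipschitz in $p$ (the first with constant at most $2$, the second with constant $1$), so the curve itself is Lipschitz continuous. I do not anticipate a substantial obstacle here: the only point requiring care is ensuring the path space $\Gamma$ is independent of $p$ (it is, since the endpoints $\pm\phi_1$ and the manifold $\mc P$ do not depend on $p$), which is what allows the pointwise comparison to pass directly to the minimax values without having to reparametrize paths.
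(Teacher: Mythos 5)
Your proof is correct and takes essentially the same approach as the paper: both rest on the uniform bound $0\le \tilde J_{p_1}(u)-\tilde J_{p_2}(u)\le p_2-p_1$ on $\mc P$ and transfer it to the minimax values over the $p$-independent class $\Gamma$. The only difference is cosmetic — you compare the maxima path by path and take infima directly, whereas the paper uses an $\e$-almost-optimal path for $c(p_2)$ and evaluates at a maximizer of $\tilde J_{p_1}$ along it.
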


\begin{proof} Proof follows as in Proposition 4.1 of
\cite{cfg}. For completeness we give details. 
Let $p_1<p_2$ then $\tilde{J}_{p_1}(u)>\tilde{J}_{p_2}(u)$ for all
$u\in \mc P$. So we have $c(p_1)\geq c(p_2)$. Now for every $\e>0$
there exists $\ga\in \Gamma$ such that
\[\max_{u\in \ga[-1,1]}\tilde{J}_{p_2}(u)\leq c(p_2)+\e,\]
and so
\[0\leq c(p_1)- c(p_2)\leq \max_{u\in \ga[-1,1]}\tilde{J}_{p_1}(u)- \max_{u\in \ga[-1,1]}\tilde{J}_{p_2}(u)+\e.\]
Let $u_0\in\ga[-1,1]$ such that
\[\max_{u\in \ga[-1,1]}\tilde{J}_{p_1}(u)=\tilde{J}_{p_1}(u_0)\]
then
\[0\leq c(p_1)- c(p_2)\leq \tilde{J}_{p_1}(u_0)-\tilde{J}_{p_2}(u_0)+\e \leq p_2-p_1+\e,\]
as $\e>0$ is arbitrary so the curve $\mc C$ is Lipschitz continuous
with constant $\leq 1$.
\end{proof}
\begin{Lemma}\label{111}
Let $A$, $B$ be two bounded open sets in $\mb R^n$, with
$A\subsetneq B$ and $B$ is connected then $\la_1(A)>\la_{1}(B)$.
\end{Lemma}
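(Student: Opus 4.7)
\medskip
\noindent\textbf{Proof proposal.}
The plan is to first establish the weak monotonicity $\la_1(A)\ge \la_1(B)$ via extension by zero, and then to upgrade it to strict monotonicity by contradiction, using simplicity of $\la_1$ together with the fact that a first eigenfunction cannot vanish on a set of positive measure inside a connected domain.

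For the weak inequality, note that any $u\in X_0(A)$ extends by $0$ outside $A$ to a function $\tilde u\in X_0(B)$, since $A\subset B$ forces $u\equiv 0$ on $\mb R^n\setminus B$ as well. We have $\|\tilde u\|_{L^2(B)}=\|u\|_{L^2(A)}$, and since the integrand $|\tilde u(x)-\tilde u(y)|^2 K(x-y)$ vanishes on $(\mb R^n\setminus A)\times(\mb R^n\setminus A)\supset(\mb R^n\setminus B)\times (\mb R^n\setminus B)$, one also checks $\|\tilde u\|_{X_0(B)}=\|u\|_{X_0(A)}$. Plugging $u=\phi_1^A$ (with $\|\phi_1^A\|_{L^2}=1$) into the Rayleigh characterization of $\la_1(B)$ recalled in the introduction immediately yields $\la_1(B)\le \la_1(A)$.

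Now suppose for contradiction that $\la_1(A)=\la_1(B)=:\la$. Then $\tilde\phi_1^A$ attains the infimum defining $\la_1(B)$ on $\mc P(B)$, so by the Lagrange multiplier argument it is a weak first eigenfunction of $-\mc L_K$ on $B$. Since $\phi_1^A$ is of constant sign we may take $\tilde\phi_1^A\ge 0$. By the simplicity of $\la_1(B)$ proved in \cite{var}, we must have $\tilde\phi_1^A=c\,\phi_1^B$ for some $c>0$. However $\phi_1^B>0$ a.e.\ on $B$: if instead $\phi_1^B\equiv 0$ on some measurable $E\subset B$ with $|E|>0$, then at any Lebesgue point $x_0\in E$ the eigenvalue equation gives
\[
0=\la\,\phi_1^B(x_0)=-\mc L_K\phi_1^B(x_0)=\frac12\int_{\mb R^n}\bigl(\phi_1^B(x_0+y)+\phi_1^B(x_0-y)\bigr)K(y)\,dy,
\]
and since $\phi_1^B\ge 0$ and $K>0$ this forces $\phi_1^B\equiv 0$, contradicting $\|\phi_1^B\|_{L^2}=1$. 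On the other hand $\tilde\phi_1^A\equiv 0$ on $B\setminus A$, a set of positive measure (the nontrivial content of the hypothesis $A\subsetneq B$ with $B$ connected), which is incompatible with $c\,\phi_1^B>0$ a.e.\ on $B$. This contradiction yields $\la_1(A)>\la_1(B)$.

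The main obstacle is the strict positivity a.e.\ of $\phi_1^B$ on the connected set $B$, i.e.\ the nonlocal strong maximum principle sketched above. The role of the connectedness of $B$ is precisely to prevent $\phi_1^B$ from being supported in a proper subcomponent; the role of hypothesis $(ii)$ (namely $K>0$ a.e.) is what makes the integral identity above force $\phi_1^B\equiv 0$. A minor point to keep in mind is that for bounded open sets $A\subsetneq B$, strict inclusion does not automatically imply $|B\setminus A|>0$, so the lemma is read in the standard convention where this holds (otherwise $X_0(A)=X_0(B)$ and the spectra coincide trivially).
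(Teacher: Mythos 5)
Your overall strategy coincides with the paper's: the weak inequality $\la_1(A)\ge\la_1(B)$ via extension by zero, and strictness by showing that equality would force the zero-extension of $\phi_1^A$ to be a first eigenfunction on $B$, contradicting positivity of the first eigenfunction throughout the connected set $B$. The genuine gap is in how you obtain that positivity. You evaluate the eigenvalue equation pointwise at a Lebesgue point $x_0$ of the zero set $E=\{\phi_1^B=0\}$, writing $-\mc L_K\phi_1^B(x_0)=\la_1(B)\,\phi_1^B(x_0)$. But $\phi_1^B$ is a priori only a weak solution in $X_0$: the weak formulation is an identity tested against functions of $X_0$, and it does not yield a pointwise identity at an arbitrary (even Lebesgue) point. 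Indeed, for a general $X_0$-function the integral $\int_{\mb R^n}\bigl(\phi_1^B(x_0+y)+\phi_1^B(x_0-y)-2\phi_1^B(x_0)\bigr)K(y)\,dy$ need not even be defined, since $K(y)\geq \la |y|^{-(n+2s)}$ near the origin; giving $\mc L_K\phi_1^B(x_0)$ a pointwise meaning requires regularity of $\phi_1^B$ near $x_0$, or a viscosity-solution framework. This is exactly why the paper routes positivity through regularity theory: $\phi_1$ is continuous and is a viscosity solution of \eqref{eq02} (Proposition 4 of \cite{low}, Theorem 2 of \cite{weak}), and then the strong maximum principle for viscosity solutions (Lemma 12 of \cite{peter}) gives $\phi_1>0$ in $B$. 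Without citing such results, or an alternative strong maximum principle valid for weak supersolutions of nonlocal equations, your one-line Lebesgue-point argument is not a proof. (There is also a harmless sign slip: with $\phi_1^B(x_0)=0$ one gets $-\mc L_K\phi_1^B(x_0)=-\frac12\int_{\mb R^n}\bigl(\phi_1^B(x_0+y)+\phi_1^B(x_0-y)\bigr)K(y)\,dy$, not $+\frac12$ times that integral; since the quantity vanishes, the intended conclusion is unaffected.)

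The remainder of your argument is sound and matches the paper: the zero-extension computation giving $\la_1(A)\ge\la_1(B)$, the use of simplicity of $\la_1(B)$ from \cite{var} to identify the extended minimizer with $c\,\phi_1^B$ (the paper leaves this step implicit), and your caveat that $A\subsetneq B$ must be understood so that $|B\setminus A|>0$ (otherwise $X_0(A)=X_0(B)$ and the strict inequality fails) is a legitimate point the paper glosses over. Once you replace the pointwise-evaluation step by the continuity/viscosity results and the strong maximum principle cited above, your proof is complete and essentially the paper's.
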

\begin{proof} 
From the Theorem 1 and 2 of \cite{weak}, we see that $\phi_1$ is
continuous and is a solution of \eqref{eq02} in viscosity sense.
Then from Lemma 12 of \cite{peter}, $\phi_1>0$. Now the variational characterization we see that for $A\subset B,$  $\la_1(A)\geq \la_{1}(B)$. Since  $\phi_1 (B)>0$ in $B$, we get the strict inequality as claimed.
\end{proof}
\begin{Lemma}\label{le31}
Let $(\al,\ba)\in \mc C$, and let $\al(x)$, $\ba(x)\in
L^{\infty}(\Om)$ satisfying
\begin{align}\label{eq31}
\la_1\leq \al(x)\leq \al,\;\;\la_1\leq \ba(x)\leq \ba.
\end{align}
 Assume that
\begin{align}\label{eq32}
 \la_1<\al(x)\;\mbox{and}\; \la_1<\ba(x)\;\mbox{ on subsets of
positive measure}.
\end{align}
Then any non-trivial solution $u$ of
\begin{equation}\label{eq33}
 \quad -\mc L_{K}u = \al(x) u^{+} - \ba(x) u^{-} \; \text{in}\;
\Om, 
 \quad u = 0 \; \mbox{in}\; \mb R^n \setminus\Om.
\end{equation}
changes sign in $\Om$ and \[\al(x)=\al\; \mbox{a.e. on}\;\{x\in \Om
: u(x)>0\}, \quad \ba(x)=\ba\; \mbox{a.e. on}\;\{x\in \Om :
u(x)<0\}.\]
\end{Lemma}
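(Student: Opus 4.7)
We may assume $\al \geq \ba$, so that $(\al,\ba) = (p+c(p), c(p))$ for some $p \geq 0$ (the case $\al < \ba$ is symmetric). The plan is twofold: first, rule out $u$ having constant sign by pairing $u$ with $\phi_1$; second, setting $w := u/\|u\|_{L^2}$, show that $\tilde{J_p}(w) \leq c(p)$ with equality characterizing the pointwise conclusions, and then force equality by contradiction via the path-construction strategy of Theorem \ref{3.4}. Suppose, toward contradiction, $u \geq 0$ (the case $u \leq 0$ is analogous, using $\ba(x) > \la_1$ on a positive-measure set in place of $\al$). Then $u$ is a nontrivial nonnegative weak solution of $-\mc L_K u = \al(x) u$, so the strong maximum principle invoked in Lemma \ref{111} (via \cite{peter}) yields $u > 0$ a.e.\ in $\Om$. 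Testing this equation with $\phi_1$ and the eigenvalue equation $-\mc L_K \phi_1 = \la_1 \phi_1$ with $u$, subtraction gives $\int_\Om (\al(x)-\la_1)\,u\,\phi_1\,dx = 0$, contradicting the strict positivity of the integrand on the positive-measure set $\{\al > \la_1\}$. Hence $u$ changes sign and $I := \int_Q u^+(x) u^-(y) K(x-y)\,dx\,dy > 0$.

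Testing \eqref{eq33} with $u^\pm$ and using the identity $(u(x)-u(y))(u^+(x)-u^+(y)) = (u^+(x)-u^+(y))^2 + u^+(x)u^-(y) + u^+(y)u^-(x)$ (from the proof of Proposition \ref{pr1}), together with the symmetry $K(x)=K(-x)$, yields
\[
\int_Q(u^{+}(x)-u^{+}(y))^2 K(x-y)\,dx\,dy + 2I = \int_\Om \al(x)(u^+)^2\,dx,
\]
\[
\int_Q(u^{-}(x)-u^{-}(y))^2 K(x-y)\,dx\,dy + 2I = \int_\Om \ba(x)(u^-)^2\,dx.
\]
Summing and adding $4I$ reconstructs $\int_Q (u(x)-u(y))^2 K\,dxdy = \int \al(x)(u^+)^2 + \int \ba(x)(u^-)^2$, and therefore
\[
\tilde{J_p}(w)\,\|u\|_{L^2}^2 \;=\; \int_\Om(\al(x)-p)(u^+)^2\,dx + \int_\Om \ba(x)(u^-)^2\,dx \;\leq\; \ba\,\|u\|_{L^2}^2 \;=\; c(p)\,\|u\|_{L^2}^2,
\]
using $\al(x)-p \leq \al-p = \ba$ and $\ba(x) \leq \ba$, with equality if and only if $\al(x) = \al$ a.e.\ on $\{u>0\}$ and $\ba(x) = \ba$ a.e.\ on $\{u<0\}$. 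It therefore suffices to show $\tilde{J_p}(w) = c(p)$.

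Suppose, for contradiction, that $\tilde{J_p}(w) < c(p)$. Writing $v^\pm := u^\pm/\|u^\pm\|_{L^2}$, I would construct a continuous path $\ga \in \Ga$ from $-\phi_1$ to $\phi_1$ in $\mc P$ on which $\tilde{J_p}(\ga(t)) < c(p)$ for all $t$, contradicting the minimax definition \eqref{eq18}. The same paths $u_1(t), u_2(t), u_3(t)$ of the proof of Theorem \ref{3.4} (with $u$ our present solution) connect $w$ to $v^+$, $v^+$ to $v^-$, and $w$ to $-v^-$; a direct computation using $\al(x)\leq\al$, $\ba(x)\leq\ba$, and $I > 0$ shows $\tilde{J_p} < c(p)$ strictly along each segment, the strict assumption at $t=0$ coming from $\tilde{J_p}(w)<c(p)$. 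A similar computation gives $\tilde{J_p}(v^-) < c(p)-p$, so $v^-$ lies in $\mc O := \{v\in\mc P : \tilde{J_p}(v) < c(p)-p\}$; since $v^- \geq 0$ vanishes on the positive-measure set $\{u>0\}$ it is not a first eigenfunction and hence not a critical point of $\tilde{J_p}$, and Ekeland's principle produces $\tilde v \in \mc O$ close to $v^-$ with strictly smaller $\tilde{J_p}$. By Lemma \ref{le01}, the connected component of $\mc O$ containing $\tilde v$ contains a critical point of $\tilde{J_p}$, which by Theorem \ref{3.4} and Proposition \ref{pr1} must be one of $\pm\phi_1$. This yields a path $u_4$ from $v^-$ to one of $\pm\phi_1$ at level $<c(p)-p$; its reflection $-u_4$ then joins $-v^-$ to the opposite eigenfunction, with the bound $|\tilde{J_p}(v)-\tilde{J_p}(-v)|\leq p$ on $\mc P$ upgrading its level to $<(c(p)-p)+p = c(p)$. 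Concatenating $-\phi_1 \leftarrow -v^- \leftarrow w \to v^+ \to v^- \to \phi_1$ (or its orientation-reversed analogue, depending on which of $\pm\phi_1$ is reached by $u_4$) produces the contradictory path. The hard part will be the reflection step: the $-p$ shift defining $\mc O$ is calibrated precisely so that the strict inequality $\tilde{J_p}<c(p)-p$ along $u_4$ transfers to $\tilde{J_p}<c(p)$ along $-u_4$, and only with this precise calibration does the full concatenated path stay strictly below $c(p)$.
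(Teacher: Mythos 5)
Your proposal is correct in substance, but it takes a genuinely different route from the paper on the main point. For the sign-change claim the paper assumes $u\geq 0$, observes that the first eigenvalue of $-\mc L_K$ with weight $\al(x)$ would then equal $1$, and contradicts this by plugging $\phi_1$ into the weighted Rayleigh quotient (using only $\phi_1>0$); you instead test with $\phi_1$ and invoke a strong maximum principle to get $u>0$ a.e. --- note this requires a positivity result for the weighted equation $-\mc L_K u=\al(x)u$, not merely for the eigenfunction covered by Lemma \ref{111}, so the paper's Rayleigh-quotient variant is the safer formulation, though your subtraction identity $\int_\Om(\al(x)-\la_1)u\phi_1\,dx=0$ is the same mechanism. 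For the identification $\al(x)=\al$ on $\{u>0\}$, $\ba(x)=\ba$ on $\{u<0\}$, the paper argues by contradiction from \eqref{eq35}--\eqref{eq36}: it localizes to nodal components $\mc O_1$, $\mc O_2$, derives $\la_1(\mc O_1)<\al$, $\la_1(\mc O_2)\leq\ba$, modifies the domains (via Lemma \ref{111} and the sets $\tilde{\mc O}_i$) to get strict inequalities, builds a sign-changing $v=v_1-v_2$ from zero-extended local eigenfunctions satisfying \eqref{eq38}, and then runs the path construction of Theorem \ref{3.4} starting from $v$. You instead test with $u^{\pm}$ to obtain the exact identity $\tilde{J_p}(u/\|u\|_{L^2})\,\|u\|_{L^2}^{2}=\int_\Om(\al(x)-p)(u^{+})^{2}+\int_\Om\ba(x)(u^{-})^{2}$, so that $\tilde{J_p}(u/\|u\|_{L^2})\leq c(p)$ with equality exactly when the conclusion holds, and then run the Theorem \ref{3.4} paths from $u$ itself under the assumption of strict inequality; the computations you assert do go through with the variable coefficients, with strictness supplied by the nonlocal cross term $I=\int_Q u^{+}(x)u^{-}(y)K(x-y)\,dxdy>0$ (automatic here since $K>0$ and $u$ changes sign). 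What your route buys is a shorter argument that avoids Lemma \ref{111} and the delicate $\tilde{\mc O}_i$ domain-perturbation step; what the paper's route buys is robustness --- the localization argument inherited from the Cuesta--de Figueiredo--Gossez scheme does not rely on $I>0$, which vanishes in the local analogue. Two small remarks: the Ekeland/perturbation detour near $v^{-}$ is unnecessary in your version, since $\tilde{J_p}(u^{-}/\|u^{-}\|_{L^2})<c(p)-p$ strictly already places it in $\mc O$; and the identification of the critical point in that component as $\pm\phi_1$ uses Theorem \ref{3.4} and Proposition \ref{pr1} exactly as you say, matching the paper.
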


\begin{proof}
Let $u$ be a nontrivial solution of \eqref{eq33}. Replacing $u$ by
$-u$ if necessary. we can assume that the point $(\al,\ba)$ in $\mc
C$ is such that $\al\geq\ba$. We first claim that $u$ changes sign
in $\Om$. Suppose by contradiction that this is not true, first
consider the case $u\geq 0$,(case $u\leq 0$ can be prove similarly).
Then $u$ solves
\[-\mc L_k u = \al(x) u \;\mbox{in}\;\Om\;\;\quad u=0\;\mbox{on}\;\mb R^n\setminus \Om.\]
This implies that the first eigenvalue of $-\mc L_K$ on $X_0$ with
respect to weight $\al(x)$ is equal to 1. i.e
\begin{align}\label{eq34}
\inf\left\{\frac{\int_{Q} (v(x)-v(y))^2 K(x-y) dxdy}{\int_{\Om}
\al(x)v^2 dx}: v\in X_0, v\ne 0\right\}=1.
\end{align}

\noi We deduce from \eqref{eq31}, \eqref{eq32} and \eqref{eq34} that
\[1=\frac{\int_{Q} (\phi_1(x)-\phi_1(y))^2 K(x-y) dxdy}{\la_1}>\frac{\int_{Q} (\phi_1(x)-\phi_1(y))^2 K(x-y) dxdy}{\int_{\Om} \al(x)\phi_{1}^2
dx}\geq 1, \] a contradiction and hence the claim.

\noi Again we assume by contradiction that either
\begin{align}\label{eq35}
|\{x\in X_0 : \al(x)<\al\; \mbox{and}\; u(x)>0\}|>0
\end{align}
or
\begin{align}\label{eq36}
|\{x\in X_0 : \ba(x)<\ba\; \mbox{and}\; u(x)<0\}|>0.
\end{align}
\noi Suppose that $\eqref{eq35}$ holds (a similar argument will hold
for \eqref{eq36}). Put $\al-\ba=p\geq 0$. Then $\ba= c(p)$, where
$c(p)$ is given by \eqref{eq18}. We show that there exists a path
$\ga\in \Ga$ such that
\begin{align}\label{eq37}
\max_{u\in \ga[-1,1]}\tilde{J_{p}}(u) < \ba,
\end{align}
which yields a contradiction with the definition of $c(p)$.

\noi In order to construct $\ga$ we show that there exists of a
function $v\in X_0$ such that it changes sign and satisfies
\begin{align}\label{eq38}
\frac{\int_{Q} (v^{+}(x)-v^{+}(y))^2 K(x-y) dxdy}{\int_{\Om}
(v^{+})^2 dx} <\al \quad\; \mbox{and}\quad\; \frac{\int_{Q}
(v^{-}(x)-v^{-}(y))^2 K(x-y) dxdy}{\int_{\Om} (v^{-})^2 dx} <\ba.
\end{align}
\noi For this let $\mc O_1$ be a component of $\{x\in\Om : u(x)>0\}$
satisfying
\[|x\in\mc O_1:\al(x)<\al|>0,\]
which is possible by \eqref{eq35}. Let $\mc O_2$ be a component of
$\{x\in\Om : u(x)<0\}$ satisfying
\[|x\in\mc O_1:\ba(x)<\ba|>0,\]
which is possible by \eqref{eq36}. Then we claim that
\begin{align}\label{eq39}
\la_1(\mc O_1)<\al \quad \mbox{and} \quad \la_1(\mc O_2)\leq \ba,
\end{align}
\noi where $\la_1(\mc O_i)$ denotes the first eigenvalue of $-\mc
L_k$ on $X_0|_{\mc O_i}=\{u\in X|_{\mc O_i}: u=0\; \mbox{on}\;\mb
R^n\setminus \mc O_i\}$. Clearly $u|_{\mc O_i}\in X_0|_{\mc O_i}$
then we have
\[\frac{\int_{ Q|_{\mc O_1}} (u(x)-u(y))^2 K(x-y) dxdy}{\int_{\mc O_1} u^2 dx}<
\al\frac{\int_{ Q|_{\mc O_1}} (u(x)-u(y))^2 K(x-y) dxdy}{\int_{\mc
O_1} \al(x)u^2 dx} =\al\] which implies that $\la_1(\mc O_1)<\al$.
The other inequality in \eqref{eq39} is proved similarly. Now with
some modification on the sets $\mc O_1$ and $\mc O_2$, we construct
the sets $\tilde{\mc O}_1$ and $\tilde{\mc O}_2$ such that
$\tilde{\mc O}_1\cap \tilde{\mc O}_2=\emptyset$ and
$\la_1(\tilde{\mc O}_1)<\al$ and $\la_1(\tilde{\mc O}_2)<\ba$. For
$\nu\geq 0$, ${\mc O}_1(\nu)=\{x\in {\mc O}_1 : dist(x,{\mc
O}_{1}^{c})>\nu\}.$ Clearly $\la_1(\mc O_1(\nu))\geq \la_{1}(\mc
O_1))$ and moreover $\la_{1}(\mc O_1(\nu))\ra \la_{1}(\mc O_1))$ as
$\nu\ra0$. Then there exists $\nu_0>0$ such that
\begin{align}\label{eq40}
\la_1(\mc O_1(\nu))<\al\;\mbox{ for all }\;0\leq \nu\leq \nu_0.
\end{align}
Let $x_0\in \partial\mc O_2\cap \Om$ (is not empty as $\mc O_1\cap
\mc O_2=\emptyset)$ and choose $0<\nu<\min\{\nu_0,
dist(x_0,\Om^c)\}$ and $\tilde{\mc O}_1={\mc O}_1(\nu)$ and
$\tilde{\mc O}_2= {\mc O}_2\cap B(x_0, \frac{\nu}{2})$. Then
$\tilde{\mc O}_1\cap \tilde{\mc O}_2=\emptyset$ and by \eqref{eq40},
$\la_1(\tilde{\mc O}_1)<\al$. Since $\tilde{\mc O}_2$ is connected
then by \eqref{eq39} and Lemma \ref{111}, we get $\la_1(\tilde{\mc
O}_2)<\ba$. Now we define $v=v_1-v_2$, where $v_i$ are the the
extension by zero outside $\tilde{\mc O}_i$ of the eigenfunctions
associated to $\la_i(\tilde{\mc O}_i)$. Then $v$ satisfies
\eqref{eq38}. Thus there exist $v\in X_0$ which changes sign and
satisfies condition \eqref{eq38}, and moreover we have
\begin{align*}
\tilde{J_p}\left(\frac{v}{\|v\|_{L^2}}\right)&=
\frac{\int_{Q}(v^{+}(x)-v^{+}(y))^2 K(x-y) dxdy}{\|v\|^{2}_{L^2}}
+\frac{\int_{Q}(v^{-}(x)-v^{-}(y))^2 K(x-y) dxdy}{\|v\|^{2}_{L^2}}\\
&\quad -2\frac{\int_{Q}v^{+}(x)v^{-}(y) K(x-y) dxdy}{\|v\|^{2}_{L^2}}-p\frac{\int_{\Om}(v^{+})^2 dx}{\|v\|^{2}_{L^2}}\\
&< (\al-p)\frac{\int_{\Om}(v^{+})^2 dx}{\|v\|^{2}_{L^2}}+\ba
\frac{\int_{\Om}(v^{-})^2 dx}{\|v\|^{2}_{L^2}}-
2\frac{\int_{Q}v^{+}(x)v^{-}(y) K(x-y) dxdy}{\|v\|^{2}_{L^2}}<\ba.
\end{align*}
\[\tilde{J_p}\left(\frac{v^+}{\|v^+\|_{L^2}}\right)< \al-p =\ba,\quad\tilde{J_p}\left(\frac{v^-}{\|v^-\|_{L^2}}\right)< \ba-p. \]

\noi Using Lemma \ref{le01}, we have that there exists a critical
point in the connected component of the set $\mc O=\{u\in\mc
P:\tilde{J_{p}}(u)<\ba-p\}$. As the point $(\al,\ba)\in \mc C$, the
only possible critical point is $\phi_1$, then we can construct a
path from $-\phi_1$ to $\phi_1$ exactly in the same manner as in
Theorem \ref{3.4} only by taking $v$ in place of $u$. Thus we have
construct a path satisfying \eqref{eq37}, and hence the result
follows.\QED
\end{proof}

\begin{Corollary}\label{le32}
Let $(\al,\ba)\in \mc C$ and let $\al(x),\ba(x)\in L^{\infty}(\Om)$
satisfying $\la_1\leq \al(x)\leq \al$ a.e, $\la_1\leq \ba(x)\leq
\ba$ a.e. Assume that $\la_1<\al(x)$ and $\la_1<\ba(x)$ on subsets
of positive measure. If either $\al(x)<\al$ a.e in $\Om$ or
$\ba(x)<\ba$ a.e. in $\Om$. Then \eqref{eq33} has only the trivial
solution.
\end{Corollary}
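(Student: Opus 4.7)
The plan is to derive this directly from Lemma \ref{le31} by a straightforward contradiction argument; the corollary is essentially a restatement that rules out the compatibility conditions Lemma \ref{le31} forces on any nontrivial solution.

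Suppose, for contradiction, that $u$ is a nontrivial solution of \eqref{eq33} under the hypotheses of the corollary. The hypotheses $\la_1 \leq \al(x)\leq \al$, $\la_1\leq \ba(x)\leq \ba$ and the strict inequality on subsets of positive measure are exactly what Lemma \ref{le31} assumes. Applying that lemma, I obtain two conclusions: first, $u$ changes sign in $\Om$, so both $\{x\in\Om: u(x)>0\}$ and $\{x\in\Om: u(x)<0\}$ have positive Lebesgue measure; second,
\[\al(x)=\al \text{ a.e.\ on }\{u>0\},\qquad \ba(x)=\ba \text{ a.e.\ on }\{u<0\}.\]

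Now the corollary's additional dichotomy immediately produces a contradiction. If $\al(x)<\al$ a.e.\ in $\Om$, then in particular $\al(x)<\al$ a.e.\ on the positive-measure set $\{u>0\}$, contradicting $\al(x)=\al$ a.e.\ there. Symmetrically, if $\ba(x)<\ba$ a.e.\ in $\Om$, then $\ba(x)<\ba$ a.e.\ on the positive-measure set $\{u<0\}$, contradicting $\ba(x)=\ba$ a.e.\ there. Either way we reach an absurdity, so no nontrivial solution exists, which proves the claim.

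Since the proof only reassembles pieces of Lemma \ref{le31}, there is no real technical obstacle; the entire content has been absorbed into the preceding lemma. The only thing worth flagging is to make sure that both sign-components of $u$ have positive measure (so that the a.e.\ pointwise equality on each component really is a genuine constraint), which is exactly why the sign-changing assertion of Lemma \ref{le31} is invoked first.
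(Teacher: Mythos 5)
Your argument is correct and is exactly the intended one: the paper states this corollary without proof as an immediate consequence of Lemma \ref{le31}, and your contradiction argument (sign-changing gives both $\{u>0\}$ and $\{u<0\}$ positive measure, so $\al(x)=\al$ a.e.\ on $\{u>0\}$ or $\ba(x)=\ba$ a.e.\ on $\{u<0\}$ clashes with the strict a.e.\ inequality) is precisely that consequence spelled out.
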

\begin{Lemma}
The curve $p\ra (p+c(p), c(p))$ is strictly decreasing, $($in the
sense that $p_1<p_2$ implies $p_1+c(p_1)<p_2+c(p_2)$ and
$c(p_1)>c(p_2))$.
\end{Lemma}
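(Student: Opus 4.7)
The Lipschitz proposition already yields the non-strict statements $c(p_1)\ge c(p_2)$ and $(p_1+c(p_1))\le (p_2+c(p_2))$ for $p_1<p_2$, so the task is to upgrade both of these to strict inequalities. The natural tool is Corollary \ref{le32}, applied to two conveniently chosen points on $\mc C$ with constant weight functions. The strategy is: in each case, assume equality and produce a nontrivial solution of an equation to which Corollary \ref{le32} attributes only the trivial solution.

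\textbf{Strict decrease of $c$.} Suppose for contradiction that $c(p_1)=c(p_2)=c$ with $p_1<p_2$. Since $(p_1+c,c)\in\mc C\subset\sum_K$, there exists a nontrivial $u_1\in X_0$ solving
\[-\mc L_K u_1=(p_1+c)u_1^+-c\,u_1^-\quad\text{in }\Om,\qquad u_1=0\text{ on }\mb R^n\setminus\Om.\]
Now apply Corollary \ref{le32} to the point $(\al,\ba)=(p_2+c,c)\in\mc C$ with the constant weights $\al(x)\equiv p_1+c$ and $\ba(x)\equiv c$. The hypotheses are satisfied: $\la_1<c$ forces $\la_1<\al(x)<\al$ and $\la_1<\ba(x)=\ba$ everywhere, and $\al(x)<\al$ a.e.\ in $\Om$. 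The conclusion of Corollary \ref{le32} is that the corresponding equation has only the trivial solution, contradicting the existence of the nontrivial $u_1$. Hence $c(p_1)>c(p_2)$.

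\textbf{Strict increase of $p+c(p)$.} Assume for contradiction that $p_1+c(p_1)=p_2+c(p_2)=:\al^*$. Since $(\al^*,c(p_2))\in\mc C$, there is a nontrivial $u_2\in X_0$ solving
\[-\mc L_K u_2=\al^*u_2^+-c(p_2)u_2^-\quad\text{in }\Om,\qquad u_2=0\text{ on }\mb R^n\setminus\Om.\]
Apply Corollary \ref{le32} to $(\al,\ba)=(\al^*,c(p_1))\in\mc C$ with constant weights $\al(x)\equiv\al^*=\al$ and $\ba(x)\equiv c(p_2)$. By the first part we already know $c(p_2)<c(p_1)$, so $\ba(x)<\ba$ a.e., while $\la_1<\al^*$ and $\la_1<c(p_2)$ give the strict $\la_1$-inequalities on $\Om$. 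Corollary \ref{le32} again forces the triviality of any solution, contradicting the nontriviality of $u_2$.

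\textbf{Main obstacle.} The only delicate point is choosing, in each case, the right point of $\mc C$ as $(\al,\ba)$ and the right weaker equation as $\al(x)u^+-\ba(x)u^-$, so that the already available solution at the \emph{other} point of $\mc C$ plays the role of the forbidden nontrivial solution in Corollary \ref{le32}. Once the assignments are made correctly, the verification of $\la_1\le\al(x)\le\al$, $\la_1\le\ba(x)\le\ba$ and the strict-inequality conditions is immediate from $c(p)>\la_1$ (Proposition \ref{prop2.8}) and from the assumed equalities.
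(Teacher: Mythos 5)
Your proposal is correct and follows essentially the same route as the paper: both derive the strict inequalities by applying Corollary \ref{le32} with one point of $\mc C$ as the reference $(\al,\ba)$ and the other point as constant weights, contradicting the fact that the latter point lies in $\sum_K$. The only cosmetic difference is that you first invoke the monotonicity from the Lipschitz proposition to reduce to the equality cases (and use the strict decrease of $c$ in the second step), whereas the paper negates the two strict inequalities directly; the verifications of the hypotheses of Corollary \ref{le32} are the same in both arguments.
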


\begin{proof}
 Let $p_1<p_2$ and suppose by contradiction that either
$(i)$ $p_1+c(p_1)\geq p_2+ c(p_2)$ or $(ii)$ $c(p_1)\leq c(p_2)$. In
case $(i)$ we deduce from $p_1+c(p_1)\geq p_2+ c(p_2)> p_1+c(p_2)$
that $c(p_1)\geq c(p_2)$. If we take $(\al, \ba)= (p_1+c(p_1),
c(p_1))$ and $(\al(x),\ba(x))= (p_2+c(p_2), c(p_2))$, then by
Corollary \ref{le32}, only solution of \eqref{eq33} with
$(\al(x),\ba(x))$ is the trivial solution which contradicts the fact
that $(p_2+ c(p_2),c(p_2))\in\sum_{K}$. If $(ii)$ holds then
$p_1+c(p_1)\leq p_1+ c(p_2)<p_2+c(p_2),$ if we take $(\al, \ba)=
(p_2+c(p_2), c(p_2))$ and $(\al(x),\ba(x))= (p_1+c(p_1), c(p_1))$,
then only solution of \eqref{eq33} with $(\al(x),\ba(x))$ is the
trivial one which contradicts the fact that $(p_1+
c(p_1),c(p_1))\in\sum_{K}$ and hence the result follows.\QED
\end{proof}

As $c(p)$ is decreasing and positive so limit of $c(p)$ exists as
$p\ra\infty.$ In the next Theorem we find the asymptotic behavior of
the first nontrivial curve.
\begin{Theorem}
If $n\geq 2s$ then the limit of $c(p)$ as $p\ra \infty$ is $\la_1$.
\end{Theorem}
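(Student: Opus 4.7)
Since $p \mapsto c(p)$ is strictly decreasing and bounded below by $\la_1$, the limit $L := \lim_{p\to\infty} c(p)$ exists and $L \geq \la_1$. To prove $L = \la_1$ I would, given $\e > 0$, construct for all $p$ sufficiently large a competitor path $\ga \in \Ga$ from $-\phi_1$ to $\phi_1$ along which $\tilde J_p \leq \la_1 + \e$; the minimax formula \eqref{eq18} then forces $c(p) \leq \la_1 + \e$, and letting $p \to \infty$ then $\e \to 0$ concludes.

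The hypothesis $n \geq 2s$ enters through a capacity step. Fix $x_0 \in \Om$, set $B_\de = B(x_0, \de) \subset \Om$, and let $\eta_\de \in C_c^\infty(\mb R^n)$ be a cut-off vanishing on $B_\de$ and equal to $1$ off $B_{2\de}$. Because balls carry vanishing $H^s$-capacity exactly when $n > 2s$, one checks that $\eta_\de \phi_1 \to \phi_1$ in $X_0$ as $\de \to 0$. Letting $\hat\la_\de$ denote the first Dirichlet eigenvalue of $-\mc L_K$ on $\Om \setminus \ov B_\de$ and $\hat\phi_\de$ its $L^2$-normalized positive eigenfunction (extended by zero), the variational characterization gives $\hat\la_\de \to \la_1$; since $\|\hat\phi_\de\|_{X_0}^2 = \hat\la_\de \to \la_1 = \|\phi_1\|_{X_0}^2$, weak compactness in the Hilbert space $X_0$ together with simplicity of $\la_1$ upgrade this to $\hat\phi_\de \to \phi_1$ strongly in $X_0$ and in $L^2(\Om)$. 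Fix $\de > 0$ so small that $\hat\la_\de < \la_1 + \e/2$ and the $L^2$-discrepancy between $\hat\phi_\de$ and $\phi_1$ is as small as required below, and fix a nonnegative $\psi \in C_c^\infty(B_\de)$ with $\|\psi\|_{L^2} = 1$. Crucially $\hat\phi_\de$ and $\psi$ have disjoint supports.

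Then I would define $\ga:[-1,1] \to \mc P$ in three pieces: on $[-1,-1/3]$ the $L^2$-renormalization of the line segment from $-\phi_1$ to $-\hat\phi_\de$; on $[-1/3,1/3]$ the curve $(3t\,\hat\phi_\de + (1-3|t|)\psi)/\|3t\,\hat\phi_\de + (1-3|t|)\psi\|_{L^2}$, which passes through $-\hat\phi_\de$, $\psi$, and $\hat\phi_\de$; and on $[1/3,1]$ the renormalization of the segment from $\hat\phi_\de$ to $\phi_1$. On the first piece the competitor is $\leq 0$, hence $\tilde J_p$ reduces to the Rayleigh quotient $\|u\|_{X_0}^2/\|u\|_{L^2}^2$ of a convex combination of $\phi_1$ and $\hat\phi_\de$, which by Cauchy--Schwarz and the $\de$-control lies uniformly below $\la_1 + \e$. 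On the third piece the competitor is $\geq 0$, so $\tilde J_p = \|u\|_{X_0}^2/\|u\|_{L^2}^2 - p$ is pushed below $\la_1 + \e$ once $p$ exceeds the bounded, $\de$-dependent Rayleigh quotient.

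The middle piece is where the non-locality makes life harder. For $s = -3t \in [0,1]$ and $\hat u := -s\hat\phi_\de + (1-s)\psi$, disjointness of supports together with $K(-y) = K(y)$ gives
\[
 \|\hat u\|_{X_0}^2 = s^2 \hat\la_\de + (1-s)^2 \|\psi\|_{X_0}^2 + 4s(1-s) R, \quad \|\hat u\|_{L^2}^2 = s^2 + (1-s)^2, \quad \int_\Om (\hat u^+)^2 dx = (1-s)^2,
\]
where $R := \int_Q \hat\phi_\de(x)\psi(y) K(x-y)\,dx\,dy > 0$. The requirement $\tilde J_p(\hat u/\|\hat u\|_{L^2}) \leq \la_1 + \e$ reduces to
\[
 s^2(\hat\la_\de - \la_1 - \e) + (1-s)^2(\|\psi\|_{X_0}^2 - p - \la_1 - \e) + 4s(1-s)R \leq 0;
\]
the first bracket is $\leq -\e/2$, Young's inequality bounds $4s(1-s)R \leq (\e/2)s^2 + (8R^2/\e)(1-s)^2$, and the second bracket swallows the leftover once $p \geq \|\psi\|_{X_0}^2 + 8R^2/\e - \la_1 - \e$. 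The range $t \in [0,1/3]$ is analogous but easier, since the cross term then enters with the opposite sign. The main obstacle is precisely this nonlocal interaction $R$: unlike for the classical Laplacian, disjoint supports do not decouple the Dirichlet-type seminorm $\|\cdot\|_{X_0}$, so the strictly positive kernel contribution between $\hat\phi_\de$ and $\psi$ must be quantified and absorbed into the $\e/2$ budget and the slack created by taking $p$ large. Combined with the capacity reduction, this produces a path with $\max_{t \in [-1,1]} \tilde J_p(\ga(t)) \leq \la_1 + \e$ for $p$ large, yielding $c(p) \leq \la_1 + \e$ and hence $L = \la_1$.
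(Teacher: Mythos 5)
Your proposal is correct, and it proves the theorem by a genuinely different route than the paper. The paper's proof is essentially a pointer to Proposition 4.4 of \cite{cfg}: since $n\geq 2s$, one can pick $\phi\in X_0$ unbounded from above near an interior point, hence $\phi\not\leq r\phi_1$ for every $r$, so that $-\phi_1+t\e\phi$ has a nontrivial positive part for every $t>0$; perturbing the obvious paths by a small multiple of $\phi$ then keeps $\tilde J_p\leq\la_1+\e$ for $p$ large, because away from a small neighborhood of $-\phi_1$ the positive part has $L^2$-mass bounded below and the term $-p\int(u^+)^2$ takes over. You instead run a domain-perturbation/capacity argument: puncture $\Om$ by a small ball (this is where $n\geq 2s$ enters for you, via zero capacity of points), use the eigenfunction $\hat\phi_\de$ of the punctured domain together with a bump $\psi$ supported in the hole, and pass from $-\phi_1$ to $\phi_1$ through $-\hat\phi_\de$, $\psi$, $\hat\phi_\de$; your explicit treatment of the nonlocal cross term $R$ via Young's inequality is exactly the point where the fractional setting differs from the local one (disjoint supports no longer decouple the energy), and your algebra there is correct, as are the estimates on the two outer pieces and the conclusion $c(p)\leq\la_1+\e$ for $p$ large. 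What each approach buys: yours is self-contained and quantitative, making the new nonlocal difficulty visible and handled; the paper's transplant of \cite{cfg} avoids spectral-stability facts and needs only the existence of one unbounded function in $X_0$. Two minor caveats, neither fatal: (a) the facts $\hat\la_\de\ra\la_1$ and $\eta_\de\phi_1\ra\phi_1$ in $X_0$ are asserted rather than proved, and for a general kernel $K$ satisfying only (i)--(iii) (no upper bound near the origin) they require an additional assumption such as $K(x)\leq\Lambda|x|^{-(n+2s)}$; the paper's own choice of an unbounded $\phi\in X_0$ has the same implicit requirement, so this is not a gap relative to the paper; (b) points have zero $H^s$-capacity also in the borderline case $n=2s$ (via logarithmic cutoffs), so ``exactly when $n>2s$'' is a slight overstatement, immaterial here since the paper assumes $n>2s$ throughout.
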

\begin{proof}
For $n\geq2s$, we can choose a function $\phi\in X_0$ such that
there does not exist $r\in\mb R$ such that $\phi(x)\leq r \phi_1(x)$
a.e. in $\Om$. For this it suffices to take $\phi\in X_0$ such that
it is unbounded from above in a neighborhood of some point $x\in
X_0$. Then by contradiction argument, one can similarly show
$c(p)\ra \la_1$ as $p\ra \infty$ as in Proposition 4.4 of
\cite{cfg}. \QED

\end{proof}

\section{Non Resonance between $(\la_1,\la_1)$ and $\mc C$}
In this section we study the following problem
\begin{align}\label{041}
\left\{
\begin{array}{lr}
-\mc L_{K} u= f(x,u)\;\mbox{in}\;\Om\\
\quad u=0\; \mbox{on}\; \mb R^n\setminus\Om,
\end{array}
\right.
\end{align}
 where $f(x,u)/u$ lies asymptotically between
$(\la_1,\la_1)$ and $(\al,\ba)\in \mc C$. Let $f:\Om\times\mb R\ra
\mb R$ be a function satisfying $L^{\infty}(\Om)$ Caratheodory
conditions.
 Given a point $(\al,\ba)\in \mc C$, we assume following:
\begin{align}\label{41}
 \ga_{\pm}(x)\leq \liminf_{s\ra \pm\infty}\frac{f(x,s)}{s}\leq \limsup_{s\ra\pm \infty} \frac{f(x,s)}{s}\leq \Ga_{\pm}(x)
 \end{align}
hold uniformly with respect to $x$, where $\ga_{\pm}(x)$ and
$\Ga_{\pm}(x)$ are $L^{\infty}$ functions which satisfy
\begin{align}\label{42}
\left\{
\begin{array}{lr}
\quad \la_1\leq \ga_{+}(x)\leq \Ga_{+}(x)\leq\al\;\mbox{a.e. in}\;\Om\\
\quad \la_1\leq \ga_{-}(x)\leq \Ga_{-}(x)\leq\ba\;\mbox{a.e.
in}\;\Om.
\end{array}
\right.
\end{align}
Write $F(x,s)=\int_{0}^{s}f(x,t) dt$, we also assume the following
inequalities:
\begin{align}\label{43}
 \de_{\pm}(x)\leq \liminf_{s\ra \pm\infty}\frac{2F(x,s)}{s^2}\leq \limsup_{s\ra\pm \infty} \frac{2F(x,s)}{s^2}\leq \De_{\pm}(x)
 \end{align}
hold uniformly with respect to $x$, where $\de_{\pm}(x)$ and
$\De_{\pm}(x)$ are $L^{\infty}$ functions which satisfy
\begin{equation}\label{44}
\left\{
\begin{array}{lr}
\quad \la_1\leq \de_{+}(x)\leq \De_{+}(x)\leq\al\;\mbox{a.e. in}\;\Om\\
\quad \la_1\leq \de_{-}(x)\leq \De_{-}(x)\leq\ba\;\mbox{a.e. in}\;\Om\\
\quad \de_{+}(x)>\la_1 \;\mbox{and}\;\de_{-}(x)>\la_1 \;\mbox{on subsets of positive measure,}\\
\quad \mbox{either}\; \De_{+}(x)<\al \;\mbox{a.e.
in}\;\Om\;\mbox{or}\;\De_{-}(x)<\ba \;\mbox{a.e. in}\;\Om.
\end{array}
\right.
\end{equation}
\begin{Theorem}\label{th51}
Let \eqref{41}, \eqref{42}, \eqref{43}, \eqref{44} hold and
$(\al,\ba)\in\mc C$. Then problem \eqref{041} admits at least one
solution $u$ in $X_0$.
\end{Theorem}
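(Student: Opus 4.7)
The plan is to obtain a weak solution of \eqref{041} as a critical point of the Euler functional
\[
\mc I(u) = \frac{1}{2}\int_Q (u(x)-u(y))^{2} K(x-y)\,dx\,dy - \int_\Om F(x,u)\,dx, \qquad u \in X_0.
\]
Under \eqref{41}-\eqref{42}, $F(x,\cdot)$ grows at most quadratically, so $\mc I \in C^1(X_0,\mb R)$ and its critical points are precisely the weak solutions of \eqref{041}. I would implement a mountain-pass/linking argument between the two rays $\pm\mb R\phi_1$, in the spirit of the scheme used for the Laplacian in \cite{cfg}.

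The bulk of the work is to verify the Palais-Smale condition. Given $\{u_k\}\subset X_0$ with $\mc I(u_k)$ bounded and $\mc I'(u_k)\to 0$ in $X_0^*$, the compactness step once $\{u_k\}$ is bounded proceeds as in Lemma~\ref{le21} using the compact embedding $X_0\hookrightarrow L^2(\Om)$. The heart of the argument is therefore boundedness. Arguing by contradiction, suppose $\|u_k\|_{X_0}\to\infty$ and set $v_k:=u_k/\|u_k\|_{X_0}$. Up to a subsequence, $v_k\rightharpoonup v$ in $X_0$, $v_k\to v$ in $L^2(\Om)$ and a.e.~in $\Om$. Dividing $\langle\mc I'(u_k),\phi\rangle=o(1)\|\phi\|_{X_0}$ by $\|u_k\|_{X_0}$ and passing to the limit via the $L^\infty$ bound on $f(x,u_k)/u_k$ from \eqref{41}-\eqref{42}, one obtains that $v$ weakly solves
\[
-\mc L_K v = A(x) v^+ - B(x) v^- \text{ in } \Om, \qquad v=0 \text{ in } \mb R^n\setminus\Om,
\]
for $L^\infty$ coefficients satisfying $\la_1\leq \ga_+\leq A\leq \Ga_+\leq \al$ and $\la_1\leq \ga_-\leq B\leq \Ga_-\leq \ba$ a.e. Combining this with the identity $2\mc I(u_k)/\|u_k\|_{X_0}^2\to 0$ and with the bounds on $2F(x,s)/s^2$ in \eqref{43}-\eqref{44}, I would show $v\not\equiv 0$. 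The strict inequalities in the last line of \eqref{44} then transfer to $A, B$, so that Corollary~\ref{le32} applies and forces $v\equiv 0$, a contradiction. Hence $\{u_k\}$ is bounded and $(PS)$ holds.

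For the geometric part, set $p:=\al-\ba\geq 0$ so that $\ba=c(p)$ in the notation of Proposition~\ref{prop2.8}. Using the strict lower bounds $\de_\pm>\la_1$ on positive-measure sets from \eqref{44}, one verifies that $\mc I(\pm R\phi_1)\to -\infty$ as $R\to\infty$. On the other hand, for every continuous path $\ga:[-1,1]\to X_0$ joining $-R\phi_1$ to $R\phi_1$, the upper bounds $\De_+\leq \al$, $\De_-\leq \ba$ combined with the minimax characterisation of $c(p)$ in Proposition~\ref{prop2.8} (applied after normalising each non-zero $\ga(t)$ to belong to $\mc P$) give
\[
\max_{t\in[-1,1]}\mc I(\ga(t))\geq C_0,
\]
for some $C_0\in\mb R$ independent of $R$. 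Therefore, for $R$ large, $c:=\inf_\ga\max_t \mc I(\ga(t))>\mc I(\pm R\phi_1)$, and the mountain-pass/linking theorem of Rabinowitz produces a critical point $u\in X_0$ of $\mc I$ at the level $c$, which is the desired weak solution of \eqref{041}.

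The main obstacle will be the Palais-Smale boundedness, specifically the step showing that the weak limit $v$ of the normalised sequence is nontrivial: the hypotheses on $f(x,s)/s$ determine only the equation satisfied by $v$, while the identity $\mc I(u_k)/\|u_k\|_{X_0}^2\to 0$ together with the bounds on $2F(x,s)/s^2$ in \eqref{43}-\eqref{44} are both needed to rule out $v\equiv 0$; only then can the strict inequalities in \eqref{44} be used to invoke Corollary~\ref{le32} and reach a contradiction.
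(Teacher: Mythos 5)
Your overall scheme (blow-up normalization $v_k=u_k/\|u_k\|_{X_0}$, passage to a limit equation, comparison with the curve $\mc C$, then a mountain-pass argument between $\pm R\phi_1$) is the same as the paper's, but the decisive step in your Palais--Smale argument is wrong. The strict inequalities in the last two lines of \eqref{44} concern $\de_\pm,\De_\pm$, which bound the primitive $2F(x,s)/s^2$; they do \emph{not} transfer to the coefficients $A,B$ of the limit equation, which arise as weak limits of quantities controlled only by \eqref{41}--\eqref{42}. Those hypotheses give merely $\la_1\leq A\leq\al$ and $\la_1\leq B\leq\ba$, with no strictness anywhere: one may perfectly well have $A=\al$ a.e. on $\{v>0\}$, or $A\equiv\la_1$, even when $\De_+<\al$ a.e. and $\de_+>\la_1$ on large sets, since $f(x,s)/s$ may oscillate between $\la_1$ and $\al$ while $2F(x,s)/s^2$ stays strictly inside. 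Hence Corollary \ref{le32} cannot be invoked for $(A,B)$: both of its strictness hypotheses (strict inequality above $\la_1$ on sets of positive measure, and $A<\al$ a.e. or $B<\ba$ a.e.) may fail. The paper's Lemma \ref{le51} instead redefines $A$ (resp.\ $B$) on $\{v\leq 0\}$ (resp.\ $\{v\geq 0\}$), where its values are irrelevant, applies Lemma \ref{le31} to obtain the trichotomy $A=\la_1$ a.e., or $B=\la_1$ a.e. (so $v$ is $\pm$ a multiple of $\phi_1$), or $v$ is a sign-changing eigenfunction for $(\al,\ba)$, and only then brings in \eqref{43}--\eqref{44} through the energy identity $\int_\Om 2F(x,u_k)/\|u_k\|_{X_0}^2\,dx\to\|v\|_{X_0}^2$ to exclude each case. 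In your plan the $F$-hypotheses are spent on proving $v\not\equiv 0$ (which in fact comes for free: testing $\mc I'(u_k)$ with $v_k-v$ gives $v_k\to v$ strongly in $X_0$, so $\|v\|_{X_0}=1$), and nothing is left to rule out the resonant limit equations; this is a genuine gap.

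A second, related gap is in the geometric part. The minimax characterization $c(p)=\ba$ together with the non-strict bounds $\De_+\leq\al$, $\De_-\leq\ba$ only yields that, along each normalized path in $\mc P$, the maximum of the $\De_\pm$-weighted functional is $\geq 0$; when you return to $\mc I$ via \eqref{43} the error terms are of order $\e\|\ga(t)\|_{L^2}^2$, which is not uniform in $R$, so your constant $C_0$ is not independent of $R$ as claimed. The paper's Lemma \ref{le52} proves the \emph{strict} inequality $d>0$ for the weighted minimax (again by Lemma \ref{le31}, using the strictness in \eqref{44}), then fixes $\e<d$ to absorb the error and treats separately paths passing through $0$, where normalization is impossible. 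Without $d>0$ your lower bound degenerates as $R\to\infty$ and the linking geometry \eqref{eq48} is not established.
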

Define the energy functional $\Psi:X_0\ra \mb R$ as
\[\Psi(u)=\frac{1}{2}\int_{Q} (u(x)-u(y))^2K(x-y)dxdy -\int_{\Om}F(x,u) dx\]
Then $\Psi$ is a $C^{1}$ functional on $X_0$ and $\fa v\in X_0$
\[\ld\Psi^{\prime}(u),v\rd=\int_{Q} (u(x)-u(y))(v(x)-v(y))K(x-y)dxdy -\int_{\Om}f(x,u)v dx\]
and critical points of $\Psi$ are exactly the weak solutions of
\eqref{041}.
\begin{Lemma}\label{le51}
 $\Psi$ satisfies the $(P.S)$ condition on $X_0$.
 \end{Lemma}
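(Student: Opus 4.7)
The overall plan is standard for nonresonance problems: first prove boundedness of any Palais-Smale sequence in $X_0$ by a contradiction/rescaling argument that produces a nontrivial solution of a limiting problem ruled out by Corollary \ref{le32}; then upgrade weak convergence to strong convergence in $X_0$ using the same compactness trick as in Lemma \ref{le21}.

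Take $\{u_k\}\subset X_0$ with $|\Psi(u_k)|\le C$ and $\|\Psi^{\prime}(u_k)\|_{X_0^*}\to 0$. Assume for contradiction that $\|u_k\|_{X_0}\to\infty$ and set $v_k:=u_k/\|u_k\|_{X_0}$, so up to a subsequence $v_k\rightharpoonup v$ in $X_0$, $v_k\to v$ in $L^2(\Om)$ and a.e.\ in $\Om$. Condition \eqref{41} forces linear growth $|f(x,s)|\le C(1+|s|)$, hence the functions $m_k(x):=f(x,u_k(x))/u_k(x)$ (set equal to any fixed constant where $u_k=0$) are uniformly bounded in $L^{\infty}(\Om)$ and, up to a subsequence, $m_k\rightharpoonup^{*} m$ in $L^{\infty}(\Om)$. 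Dividing $\ld\Psi^{\prime}(u_k),\phi\rd=o(1)$ by $\|u_k\|_{X_0}$ and passing to the limit (using weak convergence of $v_k$ in $X_0$ for the bilinear term and the product of weak-$*$ convergence of $m_k$ with strong $L^2$ convergence of $v_k$ for the nonlinear term) shows that $v$ is a weak solution of $-\mc L_K v = \tilde\al(x) v^{+} - \tilde\ba(x) v^{-}$ in $\Om$, $v=0$ on $\mb R^n\setminus\Om$, where $\tilde\al(x):=m(x)$ on $\{v>0\}$ and $\tilde\ba(x):=m(x)$ on $\{v<0\}$. On $\{v>0\}$, $u_k(x)\to+\infty$ a.e., so by \eqref{41}--\eqref{42} we have $\la_1\le\ga_{+}(x)\le\tilde\al(x)\le\Ga_{+}(x)\le\al$; likewise $\la_1\le\ga_{-}(x)\le\tilde\ba(x)\le\Ga_{-}(x)\le\ba$ on $\{v<0\}$.

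To apply Corollary \ref{le32} I need to show $v\not\equiv 0$ and that the stronger conditions on $\tilde\al,\tilde\ba$ are satisfied. Nontriviality comes from testing $\ld\Psi^{\prime}(u_k),u_k\rd/\|u_k\|^{2}_{X_0}\to 0$: this reads $1-\int_{\Om} m_k v_k^{2}dx\to 0$, and the $L^2$-strong convergence of $v_k$ forces $\int_\Om m v^2 dx=1$, so $v\ne 0$. To upgrade the pointwise bounds on $\tilde\al,\tilde\ba$ to the strict-inequality hypotheses of Corollary \ref{le32}, I plan to exploit the energy bound $\Psi(u_k)/\|u_k\|_{X_0}^{2}\to 0$ together with \eqref{43}--\eqref{44}: dividing $\Psi(u_k)$ by $\|u_k\|_{X_0}^{2}$, the analogous quotients $2F(x,u_k)/u_k^2$ have $L^\infty$ weak-$*$ limits bounded below by $\de_{\pm}$ and above by $\De_{\pm}$ on $\{\pm v>0\}$, so the a.e.\ strict inequalities in \eqref{44} transfer into $\tilde\al>\la_1$, $\tilde\ba>\la_1$ on sets of positive measure and into either $\tilde\al<\al$ a.e.\ or $\tilde\ba<\ba$ a.e. Corollary \ref{le32} then forces $v\equiv 0$, contradicting $v\ne 0$. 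This is the main obstacle and needs the dichotomy in the last line of \eqref{44} to be preserved under the weak-$*$ limit; I expect to handle this by choosing the weak-$*$ limit carefully and, if needed, replacing $\tilde\al,\tilde\ba$ by pointwise-smaller functions still bounded below by $\la_1$, as the weak form of the equation for $v$ allows such a reduction on the nodal set $\{v=0\}$.

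With $\{u_k\}$ bounded, extract $u_k\rightharpoonup u_0$ weakly in $X_0$ and $u_k\to u_0$ strongly in $L^2(\Om)$. The linear growth of $f$ gives $f(\cdot,u_k)\to f(\cdot,u_0)$ in $L^2(\Om)$, so $\int_\Om f(x,u_k)(u_k-u_0)dx\to 0$. Inserting this in $\ld\Psi^{\prime}(u_k),u_k-u_0\rd\to 0$ and using weak convergence of $v_k-u_0$ to $0$ in the bilinear form, exactly as in Lemma \ref{le21}, yields $\|u_k\|_{X_0}^{2}\to\|u_0\|_{X_0}^{2}$, which combined with weak convergence in the Hilbert space $X_0$ gives $u_k\to u_0$ strongly in $X_0$.
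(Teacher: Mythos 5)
Your overall scheme (blow-up of an unbounded (P.S.) sequence, passage to a limiting jumping problem, then the standard compactness step for bounded sequences) is the same as the paper's, and the opening and closing parts are fine. But the central step has a genuine gap: you try to apply Corollary \ref{le32} to the limit equation, and to do so you claim that the strict inequalities in \eqref{44} ``transfer'' to the weak-$*$ limits $\tilde\al,\tilde\ba$ of $f(x,u_k)/u_k$. This transfer is not valid. Conditions \eqref{43}--\eqref{44} constrain the primitive quotient $2F(x,s)/s^2$, not $f(x,s)/s$; the only pointwise information available for $\tilde\al,\tilde\ba$ comes from \eqref{41}--\eqref{42}, namely $\ga_{\pm}\leq\tilde\al,\tilde\ba\leq\Ga_{\pm}$. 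Since $f(x,s)/s$ may oscillate between $\la_1$ and $\al$ while $2F(x,s)/s^2$ stays below $\De_{+}<\al$, the weak-$*$ limit $\tilde\al$ can equal $\al$ on a set of positive measure of $\{v>0\}$ (so the dichotomy ``$\tilde\al<\al$ a.e.\ or $\tilde\ba<\ba$ a.e.'' may fail), and it can also equal $\la_1$ there (so the positive-measure strict inequalities may fail). Your proposed repair—modifying $\tilde\al,\tilde\ba$ on the nodal set $\{v=0\}$—does not help, because the obstruction sits on $\{v>0\}$ and $\{v<0\}$, where the equation pins the coefficients. A secondary issue: to use the energy information at all you need $\lim_k\int_\Om 2F(x,u_k)/\|u_k\|^2_{X_0}\,dx=\|v\|^2_{X_0}$, which requires first upgrading $v_k\rightharpoonup v$ to strong convergence in $X_0$ (the paper does this by testing \eqref{eq46} with $v_k-v_0$ and dividing by $\|u_k\|_{X_0}$); your argument never establishes this, and it is also the cleaner route to $v\neq 0$.

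What the paper does instead, and what your argument is missing, is the following case analysis. After adjusting $\al(x),\ba(x)$ on the irrelevant sets so that \eqref{eq47} holds, either hypothesis \eqref{eq32} of Lemma \ref{le31} fails—giving $\al(x)=\la_1$ a.e.\ or $\ba(x)=\la_1$ a.e., hence $v_0$ a signed multiple of $\phi_1$—or Lemma \ref{le31} applies and forces $\al(x)=\al$ on $\{v_0>0\}$, $\ba(x)=\ba$ on $\{v_0<0\}$, i.e.\ $v_0$ is a sign-changing eigenfunction for $(\al,\ba)\in\mc C$. Each alternative is then excluded not through Corollary \ref{le32} but by comparing the energy limit $\lim_k\int_\Om 2F(x,u_k)/\|u_k\|^2_{X_0}\,dx=\|v_0\|^2_{X_0}$ with the bounds $\de_{\pm},\De_{\pm}$ from \eqref{43}: in the $\phi_1$-case one gets $\la_1\int_\Om v_0^2\geq\int_\Om\de_{+}v_0^2$, contradicting $\de_{+}>\la_1$ on a set of positive measure, and in the eigenfunction case one gets $\int_\Om\al(v_0^{+})^2+\ba(v_0^{-})^2\leq\int_\Om\De_{+}(v_0^{+})^2+\De_{-}(v_0^{-})^2$, contradicting the last line of \eqref{44} because $v_0$ changes sign. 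You need this trichotomy and the energy comparison; the hypotheses on $F$ cannot be converted into hypotheses on the limiting coefficients.
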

\begin{proof}
Let $u_k$ be a (P.S) sequence in $X_0$, i.e
\begin{align}
|\Psi(u_k)|&\leq c,\notag\\
 |\ld\Psi^{\prime}(u_k),v\rd|&\leq \e_k\|v\|_{X_0}, \;\fa\; v\in
X_0,\label{eq46}
\end{align}
where $c$ is a constant and $\e_k\ra 0$ as $k\ra\infty$. It suffices
to show that $u_k$ is a bounded sequence in $X_0$. Assume by
contradiction that $u_k$ is not a bounded sequence. Then define
$v_k= \frac{u_k}{\|u_k\|_{X_0}}$. Then $v_k$ is a bounded sequence.
Therefore there exists a subsequence $v_k$ of $v_k$ and $v_0\in X_0$
such that $v_k\rightharpoonup v_0 $ weakly in $X_0$, $v_k\ra v_0$
strongly in $L^{2}(\Om)$ and $v_{k}(x)\ra v_{0}(x)$ a.e. in $\mb
R^n$. Also by using \eqref{41} and \eqref{42}, we have $f(x,u_k)/
\|u_k\|_{X_0}\rightharpoonup f_{0}(x)$ weakly in $L^2(\Om)$. Take
$v=v_k-v_0$ in \eqref{eq46} and divide by $\|u_k\|_{X_0}$ we get
$v_k\ra v_0$. In particular $\|v_0\|_{X_0}=1$. One can easily seen
from \eqref{eq46} that
\[\int_{Q} (v_0(x)-v_0(y))(v(x)-v(y))K(x-y)dxdy -\int_{\Om}f_0(x)v dx=0\; \fa\; v\in X_0.\]

\noi Now by standard argument based on assumption \eqref{41},
$f_0(x)=\al(x)v_{0}^{+}-\ba(x)v_{0}^{-}$ for some $L^{\infty}$
functions $\al(x)$, $\ba(x)$ satisfying \eqref{eq31}. In the
expression of $f_{0}(x)$, the value of $\al(x)$ (resp.$\ba(x)$) on
$\{x: v_0(x)\leq 0\}$ $($resp. $\{x: v_0(x)\geq 0\})$ are
irrelevant, and consequently we can assume that
\begin{align}\label{eq47}
\al(x)>\la_1\;\mbox{on}\;\{x:
v_0(x)\leq0\}\;\mbox{and}\;\ba(x)>\la_1\;\mbox{on}\;\{x:
v_0(x)\geq0\}.
\end{align}
So $v_0$ is a nontrivial solution of equation \eqref{eq33}. It then
follows from Lemma \ref{le31} that either $(i)$ $\al(x)= \la_1$ a.e
in $\Om$ or $(ii)$ $\ba(x)= \la_1$ a.e in $\Om$, or
 $(iii)$ $v_0$ is an eigenfunction associated to the point $(\al,\ba)\in \mc
 C$. We show that in each cases we get a contradiction.
If $(i)$ holds then by \eqref{eq47}, $v_0>0$ a.e. in $\Om$ and
\eqref{eq33} gives $\int_{Q}(v_0(x)-v_0(y))^2 K(x-y)dxdy =
\la_1\int_{\Om} v_{0}^{2}$, which implies that $v_0$ is a multiple
of $\phi_1$. Dividing \eqref{eq46} by $\|u_k\|^{2}_{X_0}$ and taking
limit we get,
\[\la_1\int_{\Om} v_{0}^{2}=\int_{Q}(v_0(x)-v_0(y))^2 K(x-y)dxdy= \lim_{k\ra\infty}\int_{\Om}
 \frac{2F(x,u_k)}{\|u_k\|^{2}_{X_0}}\geq \int_{\Om}\de_{+}(x)v_{0}^{2} dx.\]
 This contradicts assumption \eqref{44}. The case $(ii)$ is treated similarly.
 Now if $(iii)$ holds, we deduce from \eqref{43} that
 \begin{align*}
\int_{\Om}\al (v_{0}^{+})^2+\ba (v_{0}^{-})^2
&=\int_{Q}(v_0(x)-v_0(y))^2 K(x-y)dxdy = \lim_{k\ra\infty}\int_{\Om}
\frac{2F(x,u_k)}{\|u_k\|^{2}_{X_0}}\\
&\leq\int_{\Om}\De_{+}(x)(v_{0}^{+})^{2}+\De_{-}(x)(v_{0}^{-})^{2}.
\end{align*}
 This contradicts assumption \eqref{44}, since $v_0$ changes sign.
 Hence $u_{k}$ is bounded sequence in $X_0$.\QED
\end{proof}


Now we study the geometry of $\Psi$.

\begin{Lemma}\label{le52}
There exists $R>0$ such that
\begin{align}\label{eq48}
\max\{\Psi(R\phi_1),\Psi(-R\phi_1)\} < \max_{u\in \ga[-1,1]}\Psi(u)
\end{align}
for any $\ga\in \Ga_1:=\{\ga\in C([-1,1],X_0) : \ga(\pm 1)= \pm
R\phi_1\}$.
\end{Lemma}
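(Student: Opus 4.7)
The plan is to split the proof into two independent claims and combine them: (i) $\Psi(\pm R\phi_1) \to -\infty$ as $R \to \infty$, and (ii) there is a constant $M > 0$, independent of $R$ and of $\ga$, such that $\max_{u \in \ga[-1,1]}\Psi(u) \geq -M$ for every $\ga \in \Ga_1$. Given both, the stated inequality follows at once by choosing $R$ so large that $\max\{\Psi(R\phi_1), \Psi(-R\phi_1)\} < -M$.

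To prove (i), normalising $\|\phi_1\|_{L^2}=1$, I would write $\Psi(R\phi_1)/R^2 = \la_1/2 - \int_\Om F(x, R\phi_1)/R^2\,dx$ and apply Fatou's lemma. From \eqref{41} we have $f(x,s) \geq \ga_+(x) s - C$ for $s \geq 0$, hence $F(x, R\phi_1)/R^2 \geq \la_1 \phi_1^2/2 - C\phi_1/R$, supplying an integrable fixed lower bound. Combined with the pointwise estimate $\liminf_{R \to \infty} 2F(x, R\phi_1)/(R\phi_1)^2 \geq \de_+(x)$ from \eqref{43}, Fatou gives
\[
\limsup_{R\to\infty}\frac{\Psi(R\phi_1)}{R^2} \leq \frac{1}{2}\int_\Om(\la_1 - \de_+(x))\phi_1^2\,dx < 0,
\]
where strictness follows from $\de_+ > \la_1$ on a set of positive measure (by \eqref{44}) and $\phi_1 > 0$ a.e.\ (by Lemma \ref{111}). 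The analogous argument applied to $-R\phi_1$ using $\de_-$ gives $\Psi(-R\phi_1) \to -\infty$ as well.

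For (ii), fix $\ga \in \Ga_1$; WLOG $0 \notin \ga([-1,1])$ (else $\Psi(0)=0$ immediately provides the bound). The rescaling $\tilde\ga(t) := \ga(t)/\|\ga(t)\|_{L^2}$ is then a continuous path in $\mc P$ with $\tilde\ga(\pm 1) = \pm\phi_1$, so $\tilde\ga \in \Ga$. Writing $p = \al - \ba \geq 0$ (WLOG, else swap by the symmetry of $\sum_K$), Theorem \ref{3.4} provides $t^*$ with $\tilde{J_{p}}(\tilde\ga(t^*)) \geq c(p) = \ba$. Setting $v_* = \tilde\ga(t^*)$, $s_* = \|\ga(t^*)\|_{L^2}$, and applying the uniform asymptotic upper bound
\[
F(x, t) \leq \tfrac{1}{2}(\De_+(x) + \e)(t^+)^2 + \tfrac{1}{2}(\De_-(x) + \e)(t^-)^2 + C_\e
\]
coming from \eqref{43}, together with the identity $\|v_*\|^2_{X_0} - \al\int(v_*^+)^2 - \ba\int(v_*^-)^2 = \tilde{J_{p}}(v_*) - \ba$ and $\De_\pm \leq \al, \ba$, a short calculation yields
\[
\Psi(\ga(t^*)) \geq \frac{s_*^2}{2}\Bigl[\tilde{J_{p}}(v_*) - \ba + \int(\al - \De_+)(v_*^+)^2 + \int(\ba - \De_-)(v_*^-)^2 - \e\Bigr] - C_\e|\Om|,
\]
with each of the first three bracketed terms nonnegative.

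The main obstacle is to arrange the bracket to be nonnegative, because the prefactor $s_*^2$ can be arbitrarily large along $\ga$ and would otherwise amplify the $-\e$ error uncontrollably. This is exactly where the strict inequality in \eqref{44} is crucial: WLOG $\De_-(x) < \ba$ a.e.\ in $\Om$, yielding $\eta_0 > 0$ and a set $\Om_0 \subset \Om$ of positive measure on which $\ba - \De_- \geq \eta_0$. A continuity argument along $\tilde\ga$ (using that $\int_{\Om_0}(\tilde\ga(t)^-)^2$ varies continuously from $\int_{\Om_0}\phi_1^2 > 0$ down to $0$ as $t$ runs from $-1$ to $1$, and refining the choice of $t^*$ inside the subinterval where $\tilde{J_{p}}(\tilde\ga(t)) \geq \ba$) produces $t^*$ with $\int(\ba - \De_-)(v_*^-)^2 \geq c_0 > 0$, where $c_0$ is independent of $\ga$. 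Choosing $\e < c_0$ then makes the bracket nonnegative, so $\Psi(\ga(t^*)) \geq -C_\e|\Om| =: -M$, yielding (ii) and completing the proof.
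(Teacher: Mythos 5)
Your overall skeleton (endpoints $\Psi(\pm R\phi_1)\to-\infty$, plus a lower bound for $\max_{\ga}\Psi$ that is uniform over $\ga\in\Ga_1$, then choose $R$ large) is exactly the paper's, and your part (i) is fine. The gap is in part (ii), precisely at the point you yourself flag as "the main obstacle". What you need is that the bracket $\tilde{J_p}(v)-\ba+\int_\Om(\al-\De_+)(v^+)^2+\int_\Om(\ba-\De_-)(v^-)^2$, which for $v\in\mc P$ is nothing but $\Phi(v):=\|v\|_{X_0}^2-\int_\Om\De_+(x)(v^+)^2-\int_\Om\De_-(x)(v^-)^2$, has $\inf_{\ga\in\Ga}\max_{t}\Phi(\tilde\ga(t))=d>0$; the easy part ($d\ge 0$) is what your three nonnegative terms give via $c(p)=\ba$ and $\De_+\le\al$, $\De_-\le\ba$. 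Your proposed upgrade to a strictly positive, $\ga$-independent constant $c_0$ by a continuity/intermediate-value refinement "inside the subinterval where $\tilde{J_p}(\tilde\ga(t))\ge\ba$" does not work: that level set need not be an interval (it can be a single point or totally disconnected), and on it $\int_{\Om_0}(\tilde\ga(t)^-)^2$ admits no uniform lower bound. For instance, the excursion of a path above the level $\ba$ may take place near sign-changing functions whose negative part is concentrated outside $\Om_0$, or even among nonnegative functions $v\in\mc P$ with $\|v\|_{X_0}^2$ slightly larger than $\al$; in the admissible case $\De_+=\al$ a.e.\ (recall \eqref{44} only requires one of the two strict inequalities) the whole bracket is then arbitrarily close to $0$ on the entire set $\{\,t:\tilde{J_p}(\tilde\ga(t))\ge\ba\,\}$. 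Moreover, the point of the path where $\Phi$ is uniformly positive may well occur where $\tilde{J_p}<\ba$, so restricting the search to that level set is itself a flaw.

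This uniform positivity is the genuinely variational heart of the lemma and cannot be obtained by an intermediate-value argument alone. The paper proves $d>0$ by contradiction: since $\Phi(\pm\phi_1)\le\int_\Om(\la_1-\de_\pm)\phi_1^2<0\le d$ by \eqref{44}, and $\Phi$ satisfies the (P.S.) condition on $\mc P$ (as in Lemma \ref{le21}), the mountain pass theorem (Proposition \ref{pr01}) makes $d$ a critical value of $\tilde\Phi$; if $d=0$ the corresponding Lagrange multiplier vanishes and one obtains a nontrivial solution of $-\mc L_K u=\De_+(x)u^+-\De_-(x)u^-$ in $\Om$, $u=0$ in $\mb R^n\setminus\Om$, which is impossible by Lemma \ref{le31} (equivalently Corollary \ref{le32}) because of the strict inequalities in \eqref{44}. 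None of these ingredients (P.S.\ for $\Phi$, the mountain pass step, the nonresonance Lemma \ref{le31} applied to the asymptotic weights $\De_\pm$) appears in your proposal, and without them, or an equivalent substitute, the constant $c_0$ independent of $\ga$ is unjustified and the proof is incomplete. Once $d>0$ is available, your final combination (choosing $\e<d$, bounding $\max_\ga\Psi\ge-\|a_\e\|_{L^1}$, then taking $R$ large) is exactly the paper's conclusion.
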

\begin{proof}
From \eqref{43}, we have for any $\e>0$ there exists $a_{\e}(x)\in
L^{2}(\Om)$ such that for a.e $x$,
\begin{equation}\label{eq49}
\ds \left\{
\begin{array}{lr}
\quad(\de_{+}(x)-\e)\frac{s^2}{2}-a_{\e}(x)\leq F(x,s)\leq (\De_{+}(x)+\e)\frac{s^2}{2}+a_{\e}(x)\;\fa\;s>0\\
\quad(\de_{-}(x)-\e)\frac{s^2}{2}-a_{\e}(x)\leq
F(x,s)\leq(\De_{-}(x)+\e)\frac{s^2}{2}+a_{\e}(x)\;\fa\;s<0 .
\end{array}
\right.
\end{equation}
Now consider the following functional associated to the functions
$\De_{\pm}(x)$ as
\[\Phi(u)=\int_{Q} (u(x)-u(y))^2K(x-y)dxdy -\int_{\Om}\De_{+}(x) (u^{+})^2 dx- \int_{\Om}\De_{-}(x) (u^{-})^2 dx.\]
Then we claim that
\begin{align}\label{eq410}
d= \inf_{\ga \in\Ga}\max_{u\in \ga[-1,1]} \Psi(u)>0
\end{align}
where $\Ga$ is the set of all continuous paths from $-\phi_1$ to
$\phi_1$ in $\mc P$. Write $p=\al-\ba\geq 0$. we can choose
$(\al,\ba)\in \mc C$ such that $\al\geq\ba$ (as replacing $u$ by
$-u$ if necessary), we have for any $\ga\in \Ga$
\begin{align*}
&\max_{u\in \ga[-1,1]} \tilde{J_{p}}(u)\geq c(p)=\ba.\\
\mbox{i.e.} \max_{u\in \ga[-1,1]} \left(\int_{Q}
(u(x)-u(y))^2\right.&\left.K(x-y)dxdy-\int_{\Om}\al(u^{+})^2 dx-
\int_{\Om}\ba (u^{-})^2 dx\right)\geq 0
\end{align*}
 which implies
\[\max_{u\in \ga[-1,1]}\Phi(u)\geq 0,\]
by \eqref{44}. So $d\geq 0$. On the other hand, since
$\de_{\pm}(x)\leq \De_{\pm}(x)$,
\[\Phi(\pm\phi_1)\leq \int_{\Om} (\la_1 -\de_{\pm}(x))\phi_{1}^{2} dx <0\]
by \eqref{44}. Thus we have a mountain pass geometry for the
restriction $\tilde{\Phi}$ of $\Phi$ to $\mc P$,
\[\max\{\tilde{\Phi}(\phi_1),\tilde{\Phi}(-\phi_1)\}<0\leq \max_{u\in \ga[-1,1]}\tilde{\Phi}(u)\]
for any path $\ga\in \Ga$ and moreover one can verify exactly as in
Lemma \ref{le21} that ${\Phi}$ satisfies the $(P.S.)$ condition on
$X_0$. Then $d$ is a critical value of $\tilde{\Phi}$ i.e there
exists $u\in \mc P$ and $\mu\in \mb R$ such that
\begin{equation*}
\left\{
\begin{array}{lr}
\quad \Phi(u)=d\\
\quad \ld \Phi^{\prime}(u),v \rd =\mu \ld I^{\prime}(u),v
\rd\;\fa\;v\in X_0.
\end{array}
\right.
\end{equation*}
Assume by contradiction that $d=0$. Taking $v=u$ in above, we get
$\mu=0$ so $u$ is a nontrivial solution of
\[-\mc L_{K}u = \De_{+}(x) u^{+}- \De_{-}(x)u^{-} \;\mbox{in}\; \Om\quad u=0\; \mbox{in}\; \mb R^n\setminus \Om\]
Using \eqref{44}, we get a contradiction with Lemma \ref{le31} .
This completes the proof of claim.

\noi Next we show that \eqref{eq48} hold. From the left hand side of
inequality \eqref{eq49}, we have for $R>0$ and $\eta>0$,
\[\Psi(\pm R\phi_1)\leq \frac{R^2}{2}\int_{\Om}(\la_1-\de_{\pm}(x))\phi_{1}^{2}+\frac{\eta R^2}{2}+\|a_\eta\|_{L^1}, \]
Then ,$\Psi(\pm R\phi_1)\ra -\infty$ as $R\ra +\infty$, by
\eqref{44} and letting $\eta$ to be sufficiently small. Fix $\e$
with $0<\e<d$. We can choose $R=R(\e)$ so that
\begin{align}\label{eq412}
\Psi(\pm R\phi_1) < -\|a_{\e}\|_{L^1},
\end{align}
where $a_{\e}$ is associated to $\e$ using \eqref{eq49}. Consider a
path $\ga\in \Ga_{1}$. Then if $0\in \ga[-1,1]$, then by
\eqref{eq412},
\[\Psi(\pm R\phi_1) < -\|a_{\e}\|_{L^1}\leq 0=\Psi(0)\leq \max_{u\in\ga[-1,1]}\Psi(u),\]
so Lemma is proved in this case. If $0\not\in \ga[-1,1]$, then we
take the normalized path
$\tilde{\ga}(t)=\frac{\ga(t)}{\|\ga(t)\|_{L^2}}$ belongs to $\Ga$.
Since by \eqref{eq49},
\[\Psi(u)\geq \frac{\Phi(u)-\e\|u\|^{2}_{L^2}}{2}- \|a_{\e}\|_{L^1},\]
we obtain
\[\max_{\ga\in[-1,1]}\frac{2\Psi(u)+\e \|u\|^{2}_{L^2}+2\|a_{\e}\|_{L^1}}{\|u\|^{2}_{L^2}}\geq \max_{\tilde{\ga}\in[-1,1]} \Phi(v)\geq d,\]
and consequently, by choice of $\e$,
\[\max_{\ga\in[-1,1]}\frac{2\Psi(u)+2\|a_{\e}\|_{L^1}}{\|u\|^{2}_{L^2}}\geq d-\e>0,\]
This implies that
\[\max_{u\in\ga[-1,1]} \Psi(u) > -\|a_{\e}\|_{L^1} > \Psi(\pm R\phi_1),\]
by \eqref{eq412} and hence the Lemma.
\end{proof}

\noi {\bf Proof of Theorem \ref{th51}:} Lemmas \ref{le51} and
\ref{le52} completes the proof.


 \end{document}